%
%
%


\documentclass[10pt,twoside,reqno]{tran-l}





\newtheorem{theorem}{Theorem}[section]
\newtheorem{lemma}[theorem]{Lemma}

\newtheorem{definition}[theorem]{Definition}

\newtheorem{remark}[theorem]{Remark}
\numberwithin{equation}{section}

\usepackage{amsmath}  	
\usepackage{latexsym}	
\usepackage{amssymb}  	
\usepackage{amsaddr}      
\usepackage{amsthm}   	
\usepackage[square, numbers]{natbib}
\usepackage{graphicx}
\usepackage{float}            
\DeclareMathOperator{\sgn}{sgn}
\usepackage{mathrsfs}	
\usepackage[all]{xy}	
\usepackage{mathtools}

\usepackage[T1,hyphens]{url}
\usepackage{pdflscape}
\usepackage{multirow}
\usepackage{pifont}          
\usepackage{changebar}
\usepackage{fancyhdr}
\usepackage{ecltree}
\usepackage{placeins}
\usepackage{relsize,exscale}
\usepackage{bm}
\usepackage{chngcntr}
\usepackage[x11names]{xcolor}
\usepackage[colorlinks=true,linkcolor=blue,citecolor=SpringGreen4,urlcolor=blue]{hyperref}
\usepackage{rotating}
\usepackage{comment}
\usepackage{enumerate}
\usepackage{dsfont}   
\usepackage{tikz-cd}
\usepackage{pgfplots}

\renewcommand{\leq}{\leqslant}

\renewcommand{\geq}{\geqslant}

\usepackage[noend]{algpseudocode}
\usepackage{etoolbox}
\usepackage[utf8]{inputenc}
\usepackage[ruled, lined, longend, linesnumbered]{algorithm2e}

\usepackage{listings}

\makeatletter
\setcounter{tocdepth}{3}
\renewcommand{\tocsection}[3]{%
  \indentlabel{\@ifnotempty{#2}{\bfseries\ignorespaces#1 #2\quad}}\bfseries#3}
\renewcommand{\tocsubsection}[3]{%
  \indentlabel{\@ifnotempty{#2}{\ignorespaces#1 #2\quad}}#3}
\newcommand\@dotsep{4.5}
\def\@tocline#1#2#3#4#5#6#7{\relax
  \ifnum #1>\c@tocdepth 
  \else
    \par \addpenalty\@secpenalty\addvspace{#2}%
    \begingroup \hyphenpenalty\@M
    \@ifempty{#4}{%
      \@tempdima\csname r@tocindent\number#1\endcsname\relax
    }{%
      \@tempdima#4\relax
    }%
    \parindent\z@ \leftskip#3\relax \advance\leftskip\@tempdima\relax
    \rightskip\@pnumwidth plus1em \parfillskip-\@pnumwidth
    #5\leavevmode\hskip-\@tempdima{#6}\nobreak
    \leaders\hbox{$\m@th\mkern \@dotsep mu\hbox{.}\mkern \@dotsep mu$}\hfill
    \nobreak
    \hbox to\@pnumwidth{\@tocpagenum{\ifnum#1=1\bfseries\fi#7}}\par
    \nobreak
    \endgroup
  \fi}
\AtBeginDocument{%
\expandafter\renewcommand\csname r@tocindent0\endcsname{0pt}
}
\def\l@subsection{\@tocline{2}{0pt}{2.5pc}{5pc}{}}

\let\oldtocsubsubsection=\tocsubsubsection
\renewcommand{\tocsubsubsection}[2]{\hspace{3em}\oldtocsubsubsection{#1}{#2}}

\makeatother

\begin{document}

\title[BGC Stochastic Processes' Link with M-SBM]{Bi-Directional Grid Constrained Stochastic Processes' Link to Multi-Skew Brownian Motion}

\author{Aldo Taranto, Ron Addie, Shahjahan Khan}





\email{Aldo.Taranto@, Ron.Addie@, Shahjahan.Khan@, \quad @usq.edu.au}

\thanks{{\it 2021 Mathematics Subject Classification:} Primary 60G40, Secondary 60J60, 65R20, 60J65.}
\thanks{The first author was supported by an Australian Government Research Training Program (RTP) Scholarship.}
\thanks{We would like to thank the Editors of this journal for their positive feedback and invaluable advice on refining this paper.}

\address{\text{ }\\
School of Sciences\\
University of Southern Queensland\\
Toowoomba, QLD 4350, Australia}

\date{\today}


\keywords{Wiener Processes, It\^{o} Processes, Reflecting Barriers, Stochastic Differential Equation (SDE), Stopping Times, First Passage Time (FPT), Multi-Skew Brownian Motion (M-SBM)}

\begin{abstract}
Bi-Directional Grid Constrained (BGC) stochastic processes (BGCSPs) constrain the random movement toward the origin steadily more and more, the further they deviate from the origin, rather than all at once imposing reflective barriers, as does the well-established theory of It\^{o} diffusions with such reflective barriers.
We identify that BGCSPs are a variant rather than a special case of the multi-skew Brownian motion (M-SBM).
This is because they have their own complexities, such as the barriers being hidden (not known in advance) and not necessarily constant over time.
We provide a M-SBM theoretical framework and also a simulation framework to elaborate deeper properties of BGCSPs.
The simulation framework is then applied by generating numerous simulations of the constrained paths and the results are analysed.
BGCSPs have applications in finance and indeed many other fields requiring graduated constraining, from both above and below the initial position.
\end{abstract}

\maketitle 

\tableofcontents



\section*{Notation}

\begin{table}[H]
   \begin{center}
\begin{tabular}{  l  l  }
\hline
 \textbf{Term} & \textbf{Description}   \\ \hline
BGC                & Bi-Directional grid constrained\\
BGCSP             & BGC stochastic process\\
M-SBM             & Multi-skew Brownian motion\\
$X_t$              & Stochastic process over time $t$\\
$B_t$              & Brownian motion over time $t$\\
$W_t$              & Wiener process over time $t$ \\
$T$                 & Time\\
$f(X_t, t)$, $\mu$             & Drift coefficient of $X$ over $t$\\
$g(X_t, t)$, $\sigma$         & Diffusion coefficient of $X$ over $t$\\
$\Psi(X_t, t)$         & BGC coefficient of $X$ over $t$\\
$\sgn[X_t]$         & Sign function of $X_t$\\
$\mathfrak{B}_L$  &  Lower barrier\\
$\mathfrak{B}_U$  &  Upper barrier\\
$\tau_{\mathfrak{B} }$             & Stopping time when $\mathfrak{B}$ is hit\\
$| x |$            & Absolute value of $x$\\
\hline
\end{tabular}
   \label{Tab:Notation}
   \end{center}
\end{table}

\bigskip
\section{Introduction}

\noindent
In \citet{TarantoKhan2020_1}, the concept of Bi-Directional Grid Constrained (BGC) stochastic processes (BGCSPs) was introduced, and the impact of BGC on the iterated logarithm bounds of the corresponding stochastic differential equation (SDE) was derived.
In the subsequent paper (\citet{TarantoKhan2020_2}), the hidden geometry of the BGC process was examined, in particular how the hidden reflective barriers are formed, the various possible formulations that can be formed, and an algorithm to simulate BGCSPs was provided.
BGCSPs were also compared to the Langevin equation, the Ornstein-Uhlenbeck process (OUP), and it was shown how BGCSPs are more complex due to a more general framework that does not always admit an exact solution.
In this paper, we examine in what respects a BGCSP resembles and in what respect it differs from a type of multi-skew Brownian motion (M-SBM).

\bigskip \noindent
Before we do this, we recall that the BGC term $\sgn[X_t] \Psi(X_t, t)$, from now onwards simply $\Psi(X_t, t)$, unless required to be fully expressed, has been defined as impacting the $dt$ term as in \cite{TarantoKhan2020_1} or the $dW_t$ term to a lesser extent as in \cite{TarantoKhan2020_2}.
Here, we notice that even if this term is placed as a factor outside the sum of the $dt$ and $dW_t$ terms, it will still constrain the It\^{o} diffusion in a slightly different, yet fundamentally the same way.
Since $\Psi (x)$ is continuous and differentiable, we require $\Psi (x)$ to include the infinitessimal $d$, giving $d\Psi (x)$.
We thus provide the following third alternative definition of the BGCSP.

\bigskip \noindent
\begin{definition}\textbf{(Definition III of BGC Stochastic Processes)}. For a complete filtered probability space $(\Omega, \mathcal{F}, \{ \mathcal{F} \}_{t \geq 0}, \mathbb{P})$ and a (continuous) BGC function $\Psi (x) : \mathbb{R} \rightarrow \mathbb{R}$, $\forall x \in \mathbb{R}$, $f(X_t, t)$ is the drift coefficient, $g(X_t, t)$ is the diffusion coefficient, $\Psi(X_t, t)$ is the BGC function.
$f(X_t, t)$, $g(X_t, t)$ and $\Psi (X_t, t)$ are convex functions and $\sgn[x]$ is the sign function defined in the usual sense,

\bigskip
\begin{eqnarray*}
   \sgn [x] & = &
{
   \begin{cases}
\displaystyle  \phantom{-}1 &, \quad x > 0 \\
\displaystyle  \phantom{-}0 &, \quad x = 0 \\
\displaystyle                   -1 &, \quad x < 0
   \end{cases}}.
\end{eqnarray*}

\bigskip \noindent
Then, the corresponding BGC It\^{o} diffusion is defined as follows,

\bigskip
\begin{equation}\label{Eq:BGC}
      dX_t  =  f(X_t, t) \, dt + g(X_t, t) \, dW_t  -  \underbrace{ \sgn[X_t] \, d\Psi  (X_t, t) }_{\textbf{BGC}}.
\end{equation}
         \hfill    $\blacksquare$
\end{definition}

\bigskip \noindent
By choosing $\Psi (X_t, t)$ to be a `suitable' parabolic cylinder function in relation to the underlying It\^{o} diffusion, as shown in \cite{TarantoKhan2020_2}, namely $\Psi(X_t, t) = \left( \frac{X_t}{10} \right)^2$, the hidden barriers become visible when enough simulations hit them.
Figure \ref{Fig:SamplePositiveGrowthPathofGridTrader3of3} shows that (\ref{Eq:BGC}) is simulated 1,000 times for both with and without BGC, together with the hidden lower reflective barrier $a = \mathfrak{B}_L$ and the hidden reflective upper barrier $b = \mathfrak{B}_U$ also being displayed.

\begin{figure}[htb]
  \centering
   \includegraphics[width=\linewidth]{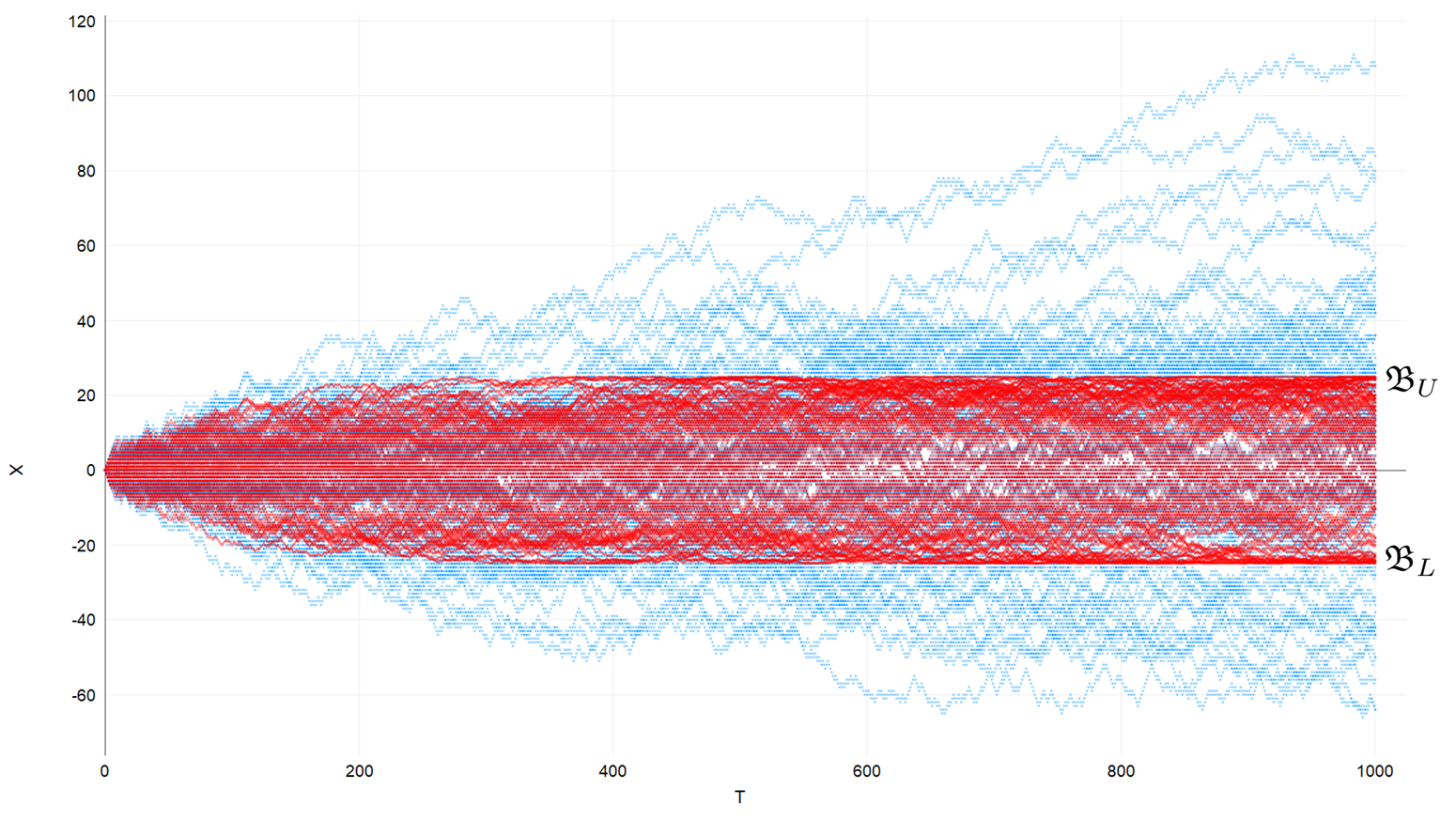}
 \caption{Hidden Reflective Barriers due to Ideal $\Psi(X_t, t)$}
  \label{Fig:SamplePositiveGrowthPathofGridTrader3of3}
\flushleft
   \textbf{\footnotesize \noindent{Blue = Unconstrained It\^{o} process,  Red = BGC It\^{o} process.
Even though there are no hard reflective barriers present, the function $\boldsymbol{\Psi(X_t, t)}$ constrains the It\^{o} process `as if' there are two hidden reflective barriers $\boldsymbol{\mathfrak{B}_L}$, $\boldsymbol{\mathfrak{B}_U}$.
}}
\end{figure}
\FloatBarrier

\bigskip \noindent
An even more specialized case that is of great interest, due to its simplicity, is where drift function $f(x): \mathbb{R} \rightarrow \mathbb{R}$ is set to $f(x)=\mu$, $\forall x \in \mathbb{R}$ and similarly, the diffusion function $g(x): \mathbb{R} \rightarrow \mathbb{R}$ is set to $g(x)=\sigma$, $\forall x \in \mathbb{R}$.
One can even define $f(x)$ and $g(x)$ in a more gradual manner such that in the limit they approach the typical constant expressions for the drift and diffusion coefficients,

\bigskip 
\begin{equation}\label{limits}
   \lim_{x \rightarrow \infty} f(x) = \mu \quad , \quad \lim_{x \rightarrow \infty} g(x) = \sigma.
\end{equation}

\bigskip \noindent
Depending on whether the generalized $f(x)$ and $g(x)$ are used, or whether the simplified $\mu$ and $\sigma$ are used, then the resulting reflective boundary theorems will either have more complexity, or less complexity, respectively.

\bigskip \noindent
Before proceeding to the Methodology section, (\ref{Eq:BGC}) is discussed within a multi-Dimensional context with some examples, which will help explain the multidimensionality of M-SBMs.

\bigskip
\begin{remark}
By multi-Dimensional diffusion, we are not referring to the usage in papers such as \citet{SacerdoteTamborrinoZucca2016} in which each dimension is reserved for each possible path or simulation, as shown in Figure \ref{Fig:MultiDimensional1}(a).

\bigskip \noindent
\begin{figure}[htb]
  \centering
   \includegraphics[width=\linewidth / 2 - \linewidth/60]{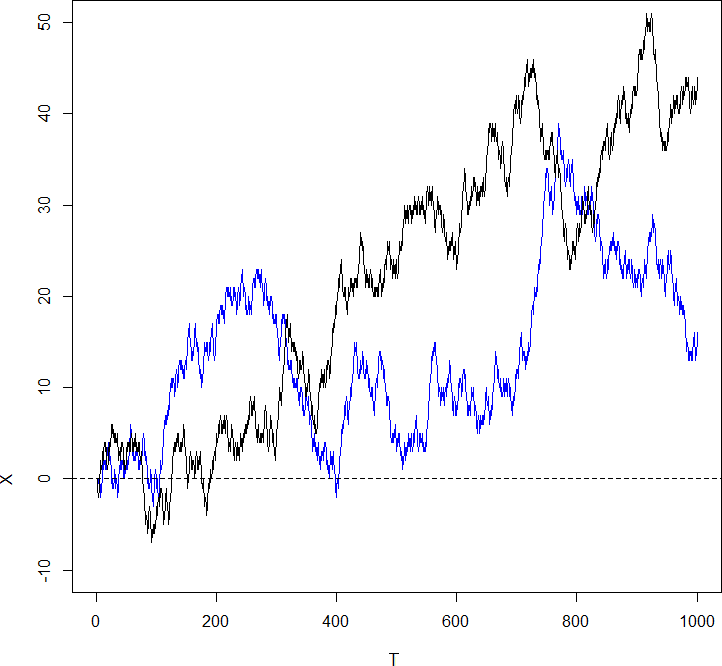} $\text{ }$
   \includegraphics[width=\linewidth / 2 - \linewidth/60]{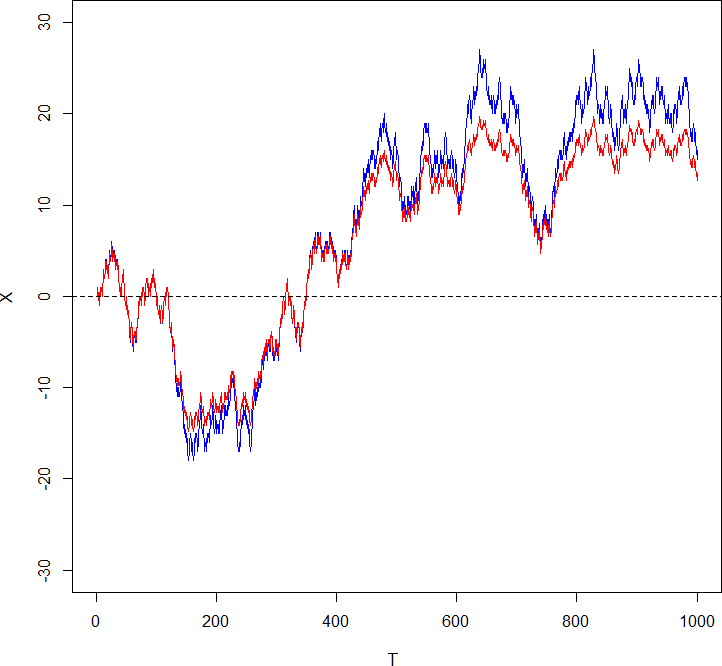}\\
   \textbf{\footnotesize \noindent{$\quad \quad$ (a). $\quad \quad \quad \quad \quad \quad \quad \quad \quad \quad \quad \quad \quad \quad \quad \quad$ (b).}}
 \caption{Clarifying Subtle Differences for BGC Stochastic Processes}
  \label{Fig:MultiDimensional1}
\flushleft
   \textbf{\footnotesize \noindent{(a). Two unconstrained 1-D It\^{o} diffusions are not considered in this paper as one 2-D It\^{o} diffusion.\\
(b). Blue = Unconstrained It\^{o} diffusion, Red = BGC It\^{o} diffusion, noting that the BGC process exhibits a `skew' above and below and can be considered as a 2-SBM.
}}
\end{figure}
\FloatBarrier

\bigskip \noindent
Notice that in Figure \ref{Fig:MultiDimensional1}(b), the BGCSP exhibits a `skew' and can be considered as a 2-SBM -which will be elaborated in the Methodology section.
Instead of the usage in Figure \ref{Fig:MultiDimensional1}(a), we use the standard interpretation and generally accepted usage of the term `multi-Dimensional diffusion' in which each dimension is reserved for each co-ordinate of the multivariate It\^{o} process, as shown in Figure \ref{Fig:MultiDimensional2}.

\begin{figure}[htb]
  \centering
   \includegraphics[width=\linewidth / 2 - \linewidth/60]{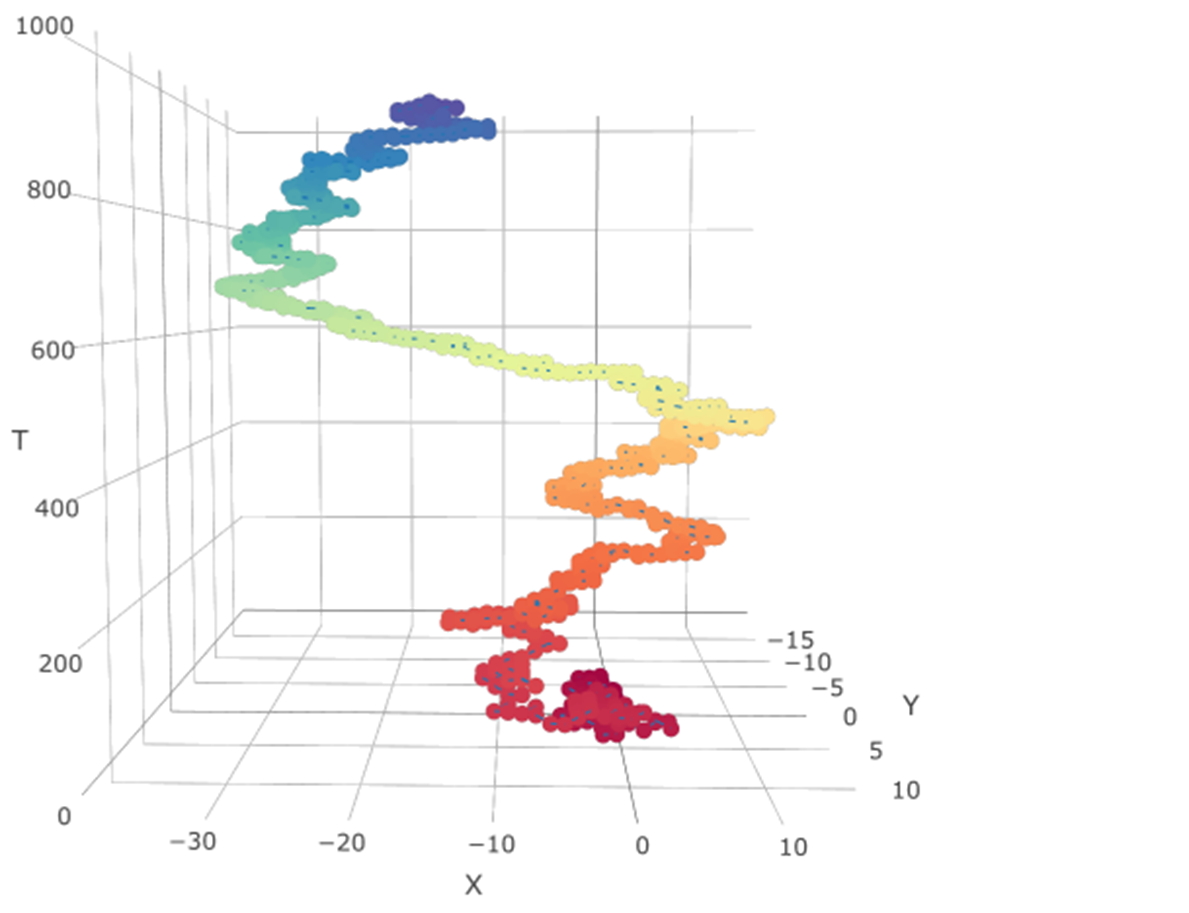}
   \includegraphics[width=\linewidth / 2 - \linewidth/60]{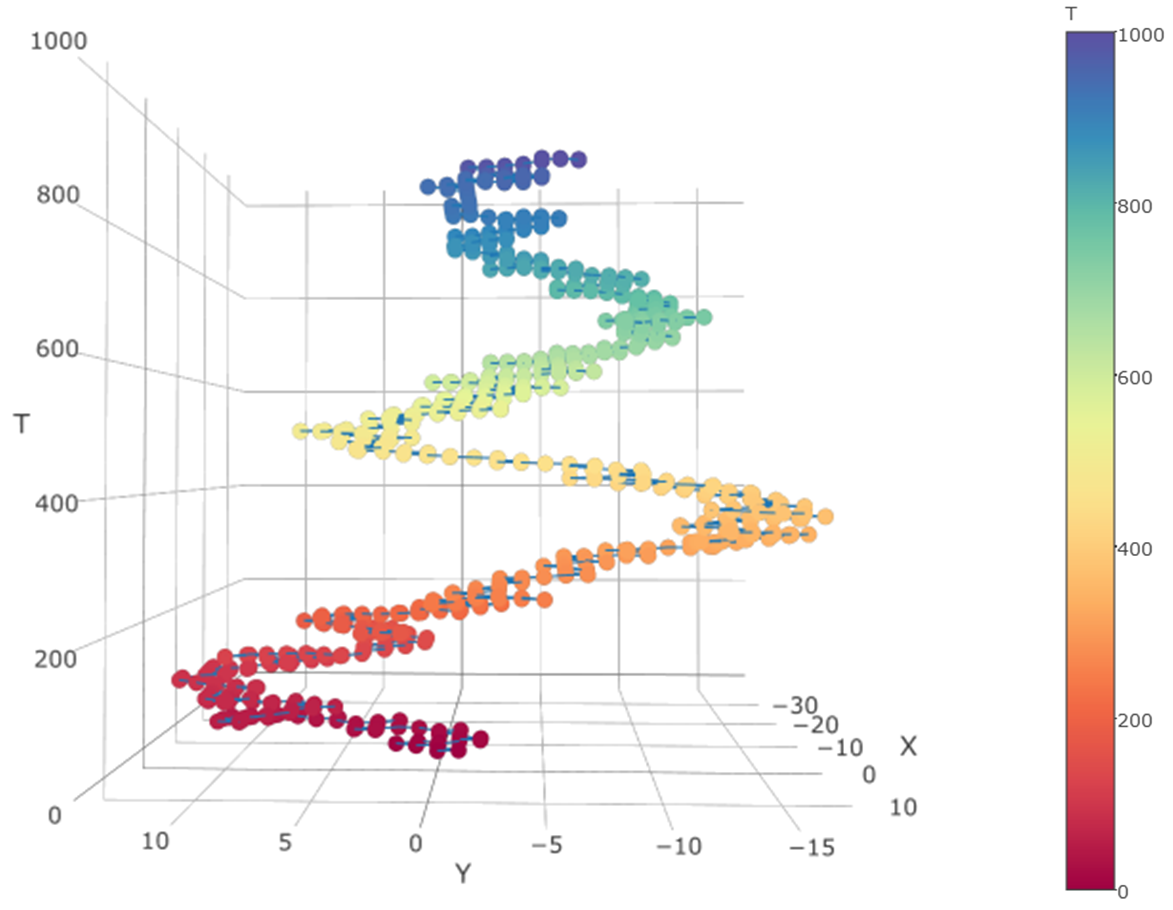}
   \textbf{\footnotesize \noindent{(a). 2-D It\^{o} Diffusion as seen from $\boldsymbol{X}$ $\quad$ (b). 2-D It\^{o} Diffusion as seen from $\boldsymbol{Y}$}}
 \caption{2-Dimensional It\^{o} Diffusion as seen from 2 Different Dimensions}
  \label{Fig:MultiDimensional2}
\flushleft
   \textbf{\footnotesize \noindent{The colour scale is a hot to cold measure indexed on the time variable $\boldsymbol{t}$ in dimension $\boldsymbol{T}$ for $\boldsymbol{t\in[0, 1000]}$.
}}
\end{figure}

\bigskip \noindent
We also note that the Methodology, Results and Discussion sections are either directly multi-Dimensional or can be extended to multi-Dimensional expressions.
\flushright    $\blacksquare$
\end{remark}
\FloatBarrier

\bigskip \noindent
We now have a more geometric understanding of how BGCSPs can be constrained along multiple dimensions, to be in a better position to express (\ref{Eq:BGC}) in a more generalized $n$-Dimensional framework in (\ref{1_0}).

\bigskip \noindent
\begin{definition}\textbf{(Definition IV - Multi-Dimensional BGC Stochastic Processes)}.
Let $\textstyle X:[0,\infty )\times \Omega \rightarrow \mathbb{R} ^{n}$ defined on a probability space $(\Omega ,\mathcal{F}, \mathcal{ \{ F \} }_{t \geq 0}, \mathbb{P})$ be an It\^{o} diffusion satisfying the conditions given in the definition of the 1-Dimensional It\^{o} process for each $\{ 1 \leq i \leq n \}$, $\{ 1 \leq j \leq m \}$, then we can form $n$ 1-Dimensional It\^{o} processes in an SDE of the form,

\bigskip
\begin{equation}\label{1_0}
\resizebox{0.9\hsize}{!}{$
   \begin{array}{ccc}
      dX_1  &    =      &   f_1 (X_{t}, t) \, d t + \underbrace{g_{1,1} (X_{t}, t) \, d W_1 (t) + \cdots +  g_{1,m} (X_{t}, t) \, d W_m (t)}_{\textbf{m}}\\
               &             &     - \underbrace{ \sgn[X_t] \, d \Psi_1 (X_t, t) - ... - \sgn[X_t] \, d \Psi_m (X_t, t)}_{\textbf{m BGC}}\\
  \vdots    &  \vdots  &    \vdots                                                    \\
      dX_n  &    =      &   f_n (X_{t}, t) \, d t + \underbrace{g_{n,1} (X_{t}, t) \, d W_1 (t) + \cdots + g_{n,m} (X_{t}, t) \, d W_m (t)}_{\textbf{m}} \\
               &            &    -  \underbrace{  \sgn[X_t] \, d \Psi_1 (X_t, t)  - ... - \sgn[X_t] \, d\Psi_m (X_t, t)}_{\textbf{m BGC}}
   \end{array}
$}
\end{equation}

\bigskip \noindent
where $W_t = (W_1 (X_t, t),..., W_m (X_t, t))$ is an $m$-Dimensional Wiener process and $f:\mathbb{R} ^{n} \rightarrow \mathbb{R} ^{n}$ and $g :\mathbb{R} ^{n} \rightarrow \mathbb{R} ^{n \times m}$ are the drift and diffusion fields respectively.
For a point $\displaystyle x \in \mathbb{R} ^{n}$, let $\displaystyle \mathbb{P} ^{x}$ denote the law of $X$ given initial datum $\displaystyle X_{0}=x$, and let $\displaystyle \mathbb{E} ^{x}$ denote expectation with respect to $\displaystyle \mathbb{P} ^{x}$.
Now, (\ref{1_0}) can also be expressed in matrix notation as,

\bigskip
\begin{equation}\label{ItoJumpDiffusion}
  d\mathbf{X}_t = \boldsymbol{f}_t \, dt + \boldsymbol{g}_t \, d\mathbf{W}_t - \boldsymbol{h}_t,
\end{equation}

\bigskip \noindent
where,

\bigskip

\[
\resizebox{0.9\hsize}{!}{$
\mathbf{X}_t = \left(
   \begin{array}{c}
       dX_1 (t)                     \\
      \vdots                              \\
       dX_n (t)
   \end{array}
\right),
\text{ }
\boldsymbol{f}_t = \left(
   \begin{array}{c}
       f_1 (X_t, t) \\
      \vdots                              \\
       f_n (X_t, t)
   \end{array}
\right),
\text{ }
\boldsymbol{g}_t = \left(
   \begin{array}{ccc}
      g_{1,1}(X_1 (t), t)  &  \cdots   &  g_{1,m}(X_1 (t), t)    \\
       \vdots    &  \ddots  &  \vdots        \\
      g_{n,1}(X_n (t), t)  &  \cdots   &  g_{n,m}(X_n (t), t)
   \end{array}
\right),
$}
\]

\[
\resizebox{0.65\hsize}{!}{$
d\mathbf{W}_t = \left(
   \begin{array}{c}
      dW_1 (t) \\
      \vdots                              \\
      dW_n (t)
   \end{array}
\right),
\text{ }
 \boldsymbol{h}_t = \left(
   \begin{array}{c}
       \sgn[X_1 (t)] \, d \Psi_1 (X_1 (t), t) \\
      \vdots                              \\
       \sgn[X_n (t)] \, d \Psi_n (X_n (t), t)
   \end{array}
\right),
$}
\]

\bigskip \noindent
for vectors $\boldsymbol{f}_{t}$, $\boldsymbol{h}_{t}$, $\boldsymbol{W}_{t}$ and matrix $\boldsymbol{g}_t$.
\hfill    $\blacksquare$
\end{definition}

\bigskip \noindent
Having reviewed the multi-Dimensional nature of unconstrained and BGC It\^{o} diffusions, the paper is structured as follows;
Section 2 provides the Literature Review, Section 3 the Methodology, Section 4 the Results and Discussion, and finally, Section 5 the Conclusion.

\bigskip
\section{Literature Review}

\subsection{Constraining Stochastic Processes by Reflective Barriers}
The constraining of stochastic processes in the form of discrete random walks and continuous Wiener processes has been researched for many decades.
By reviewing \citet{Weesakul1961} and the references therein, we see an established and rigorous analysis of random walks between a reflecting and an absorbing barrier.
\citet{Lehner1963} extended this to 1-Dimensional random walks with a partially reflecting (semipermeable) barrier.
\citet{Gupta1966} generalised this concept further to random walks in the presence of a multiple function barrier (MFB) where the barrier can either be partially reflective, absorptive, transmissive or hold for a moment, but not terminating or killing the random variable.
\citet{DuaKhadilkarSen1976} extended the work of \cite{Lehner1963} to random walks in the presence of partially reflecting barriers in which the probability of a random variable or datum reaching certain states was determined.
\citet{LionsSznitman1984} extended the research on reflecting boundary conditions through the refinement to SDEs.
\citet{Percus1985} considered an asymmetric random walk, with one or two boundaries, on a 1-Dimensional lattice.
At the boundaries, the walker is either absorbed or reflected back to the system.
\citet{BudhirajaDupuis2003} considered the large deviation properties of the empirical measure for 1-Dimensional constrained processes, such as reflecting Wiener processes, the M/M/1 queue, and discrete-time analogs.
\citet{Lepingle2009} examined stochastic variational inequalities to provide a unified treatment for SDEs existing in a closed domain with normal reflection and (or) singular repellent drift.
When the domain is a polyhedron, he proved that the reflected-repelled Wiener process does not hit the non-smooth part of the boundary.
\citet{BramsonDaiHarrison2010} examined the positive recurrence (to the origin) of reflecting Wiener processes in 3-Dimensional space.
\citet{BallRoma1998} examined the detection of mean reversion within reflecting barriers with an application to the European exchange rate mechanism (EERM).

\bigskip
\subsection{Multi-Skew Brownian Motion} The concept of skew Brownian motion (SBM) was first introduced in the book by \citet{ItoMcKean1965} as a diffusion with a drift represented by a generalized function, which solves an SDE involving its symmetric local time.
Specifically, an SBM $X = \{ X_t \}_{t \in [0,T]}$ is the solution of,

\[
X_t  = B_t + (2\alpha -1) L^{0}_{t} (X), \quad \alpha \in (0,1),
\]

\bigskip \noindent
where $B = \{ B_t \}_{t \in [0,T]}$ is a standard Brownian motion (BM).
However, standard Brownian motion is oftentimes abbreviated as SBM, so to reduce any possible confusion and to eliminate any reference to the original botanical context of Robert Brown -to which the term `Brownian motion' is attributed -we will express $B_t$ in the rest of this paper in the more mathematically precise context as a Wiener process $W = \{ W_t \}_{t \in [0,T]}$.
$L^{0}_{t}(X) = \lim_{\epsilon \downarrow 0} \frac{1}{2 \epsilon} \int^{t}_{0} \mathds{1}_{ \{ |B_{X_s}| \leq \epsilon \} } \, ds$ is the symmetric local time at $X_0$.
Note that for $\alpha =\frac{1}{2}$, the above equation is reduced to a Wiener process. 
\citet{HarrisonShepp1981} then considered diffusions with a discontinuous local time.
The literature on SBMs was consolidated by \citet{HarrisonShepp1981} and later by \citet{Lejay2006}.
Applications of SMBs were extended by \citet{Ramirez2011} by applying multi-SBM (M-SBM) and diffusions in layered media that involve advection flows.
\citet{AppuhamillageSheldon2012} linked SBMs to existing research by deriving the first passage time (FPT) of SBM.
In 2015, the multiple barrier research of \cite{Ramirez2011} was extended by \citet{AtarBudhiraja2015}, \citet{OuknineRussoTrutnau2015} who collapsed barriers to an accumulation point, and by \citet{DereudreMazzonettoRoelly2015} who derived an explicit representation of the transition densities of SBM with drift and two semipermeable barriers.
\citet{Mazzonetto2016} extended her prior research \cite{DereudreMazzonettoRoelly2015} on SBMs by deriving exact simulations of SBMs and M-SBMs with discontinuous drift in her Doctoral dissertation.
\citet{GairatShcherbakov2017} applied SBMs and their functionals to finance.
\citet{Krykun2017} also extended the convergence of SBM with local times at several points that are contracted into a single one.
\citet{Mazzonetto2019} has also recently examined the rates of convergence to the local time of oscillating and SBMs.

\bigskip \noindent
For applications of BGC stochastic processes, the reader is referred to \cite{TarantoKhan2020_4}, \cite{TarantoKhan2020_5}, \cite{TarantoKhan2020_6}, \cite{TarantoKhan2020_7}.

\bigskip
\section{Methodology}

\noindent
Before proceeding to the main result of this paper, it is instructive to establish a theoretical foundation by considering the key research for It\^{o} diffusions constrained by two reflective barriers and then examining the necessary extensions that need to be derived for M-SBM constrained It\^{o} diffusions.

\bigskip 
\subsection{It\^{o} Diffusions Constrained by Two Reflective Barriers}
Given a filtered probability space $\Lambda := (\Omega, \mathcal{F}, \{ \mathcal{F}_t \}_{t \geq 0}, \mathbb{P})$ with the filtration $\{ \mathcal{F}_t \}_{t \geq 0}$, then the reflected diffusion $\{ X_t : t \geq 0 \}$ with two-sided barriers $\mathfrak{B}_L$, $\mathfrak{B}_U$ at $a$, $b$ respectively can be defined as,

\begin{equation}\label{Eq:3alpha}
dX_t = f (X_t) \, dt +g(X_t) \, dW_t + \underbrace{ \overbrace{d \mathcal{A}_t}^{a} - \overbrace{d \mathcal{B} _t}^{b} }_{\text{regulators}}, \quad X_0 \in (a,b), \quad x \in [a,b].
\end{equation}

\bigskip \noindent
\begin{remark}
The process $\mathcal{A}$ and $\mathcal{B}$ are known in the literature as `regulators' for the points $a$ and $b$, however, we believe that a better term is `detectors' because they mainly detect or count how many times $X_t$ reaches $a$ and $b$.
\end{remark}

\bigskip \noindent
Here, the drift $f(x)$ is Lipschitz continuous, the diffusion $g(x)$ is strictly positive and Lipschitz continuous.
$a$, $b$ with $-\infty < a < b < + \infty$ are given real numbers, and $(W_t, 0 \leq t < \infty)$ is the 1-Dimensional standard Wiener process on $\Lambda$.
The processes $\mathcal{A} = \{ \mathcal{A}_t \}_{t \geq 0}$ and $\mathcal{B} = \{ \mathcal{B}_t \}_{t \geq 0}$ are the minimal non-decreasing and non-negative processes, which restrict the process $X_t \in [a,b], \forall t \geq 0$.
More precisely, the processes $\{ \mathcal{A}_t, t \geq 0 \}$ and $\{ \mathcal{B}_t, t \geq 0 \}$ increase only when $X_t$ hits the boundary $a$ and $b$, respectively, so that $\mathcal{A}_0 = \mathcal{B}_0 = 0$, $\mathds{1}$ is the characteristic function of the set and,

\begin{equation}\label{1_1}
   \int^{\infty}_{0} \mathds{1}_{\{ X_t > a \} } \, d \mathcal{A}_t = 0, \quad \int^{\infty}_{0} \mathds{1}_{\{ X_t < b \} } \, d \mathcal{B}_t = 0.
\end{equation}

\bigskip \noindent
Furthermore, the processes $\mathcal{A}$ and $\mathcal{B}$ are uniquely determined by the following properties (\citet{Harrison1986}),

\begin{enumerate}
\item both $t \rightarrow \mathcal{A}_t$ and $t \rightarrow \mathcal{B}_t$ are nondecreasing processes,
\item $\mathcal{A}$ and $\mathcal{B}$ increase only when $X=a$ and $X=b$, respectively, that is $\int^{t}_{0} \mathds{1}_{\{ X_s = a \} } \, d \mathcal{A}_s = \mathcal{A}_t$ and $\int^{t}_{0} \mathds{1}_{\{ X_s = b \} } \, d \mathcal{B}_s = \mathcal{B}_t$, for $t \geq 0$.
\end{enumerate}

\bigskip \noindent
We can consider the two reflective barriers as forming a 2-SBM.
Furthermore, it is instructive for BGCSP to see the two barriers $a$ and $b$ in $\mathbb{R}^2$, shown in Figure \ref{Fig:DiffusionBetweenTwoConstantReflectiveBarriers}(a) as embedded in $\mathbb{R}^3$ by a governing BGC surface $\Psi (X_t, t)$, as shown in Figure \ref{Fig:DiffusionBetweenTwoConstantReflectiveBarriers}(b).

\begin{figure}[ht]
   \centering
 \includegraphics[width=\linewidth]{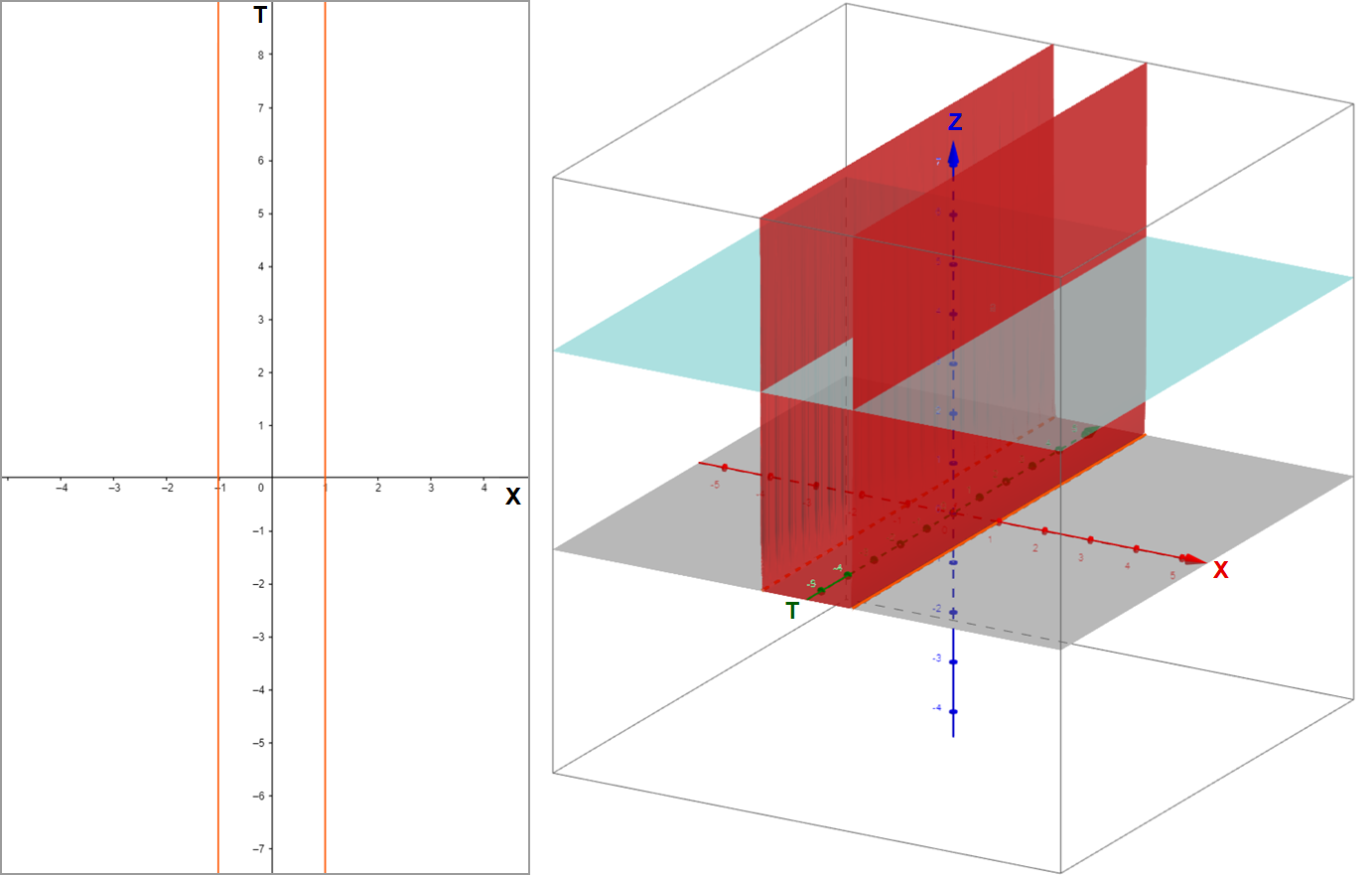}
   \textbf{\footnotesize \noindent{
(a). Barriers in $\boldsymbol{\mathbb{R}^2}$ - Contour plot $\quad \quad$ (b). Barriers in $\boldsymbol{\mathbb{R}^3}$- Contour \& surface plot}}
   \caption{Diffusion Between Two Constant Reflective Barriers}
   \label{Fig:DiffusionBetweenTwoConstantReflectiveBarriers}
\flushleft
   \textbf{\footnotesize \noindent{As the light blue plane $\boldsymbol{z = k}$ for $\boldsymbol{k \in \mathbb{R}_{+}}$ descends to the origin, the orange contour lines of the red constant reflective barriers do not change.
The contour lines arise from when the blue plane intersects the barrier surface in $\mathbb{R}^3$ that acts as two barriers in $\mathbb{R}^2$.
}}
\end{figure}

\bigskip
\begin{remark}
Many papers such as \cite{HuWangYang2012} and \citet{Linetsky2005} define the two barriers at the boundaries of $[0, r]$ for some $r \in \mathbb{R}_{+}$.
By applying a series of transformations, one can map their findings to the context of BGCSP, as shown in Figure \ref{Fig:MappingFunction}.
\end{remark}

\bigskip

\begin{figure}[htb]
  \centering
   \includegraphics[width=\linewidth]{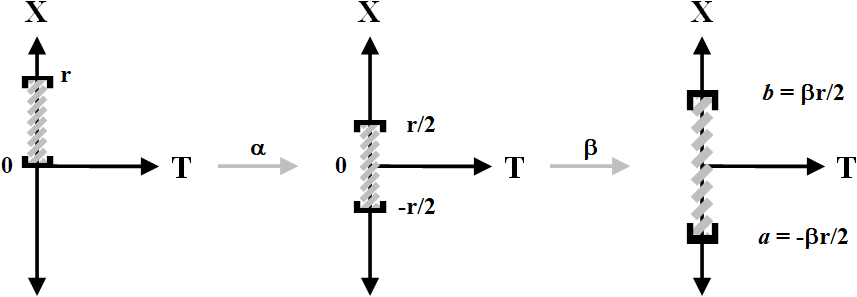}
 \caption{Mapping Traditional Barriers to BGC Barriers}
  \label{Fig:MappingFunction}
\flushleft
\textbf{\footnotesize \noindent{By applying the transformations $\boldsymbol{\alpha}$ and $\boldsymbol{\beta}$ to the barriers at $\boldsymbol{0}$ and $\boldsymbol{r}$, the result is a linear combination of mappings with the same properties as the original for the barriers at $\boldsymbol{a}$ and $\boldsymbol{b}$.
}}
\end{figure}
\hfill    $\blacksquare$

\bigskip
\subsection{Multi-Skew Brownian Motion}

Having examined It\^{o} diffusions constrained by two reflective barriers, we now consider the so-called multi-skew Brownian motion, constrained by multiple barriers of varying degrees of reflectiveness.

\bigskip
\begin{definition} \label{Def:3_3}
\textbf{(M-SBM)}. A multi-skew Brownian motion (M-SBM) represented (adapted from \citet{Mazzonetto2016}) by $(\beta_1, ..., \beta_n)$-SBM, or more simply by $\beta$-SBM with $n$ semipermeable barriers of varying permeability coefficients, respectively $\beta = (\beta_1, ..., \beta_n)$, $x_0$ is the starting position, the coefficients $\beta_j \in [-1,1]$, barriers $x_1 <... < x_n$, local times $L^{x_j }_{t}$, and $\mathcal{E}$ is the set of all parameters of the M-SBM, then the M-SBM is expressed as,

\bigskip
\begin{eqnarray*}
 & & {
\begin{cases}
   \begin{array}{ccl}
\displaystyle  dX_t           & = & \mu \, dt + \sigma \, d W_t + \beta_1 \, dL^{x_1}_{t} + ...+ \beta_n \, dL^{x_n}_{t} \\
\displaystyle  X_0            & = & x_0  \\
\displaystyle  \mathcal{E} & = & \big\{ \mu, \sigma, \left( \beta_1, ..., \beta_n \right) \big\} \in \mathbb{R}  \\
\displaystyle L^{x_j}_{t}   & = & \int ^{t}_{0} \mathds{1}_{ \{ X_s = x_j \} } \, d L^{x_j}_{s},  \text{ }    \forall j \in \{ 1, ..., n \}
   \end{array}.
\end{cases}
}
\end{eqnarray*}       \hfill    $\blacksquare$
\end{definition}

\bigskip
\begin{remark}
The term $\sigma$ has been added to the \cite{Mazzonetto2016} definition so that the process can fit a wider range of models.
We require that $\beta_i \in [-1, 1]$.
The cases when $\beta_i = \{ -1, 1 \}$ are said to exhibit zero permeability (i.e. impermeability or full reflectiveness), and when $\beta_i \in (-1, 1)$ the process is said to exhibit partial reflectiveness (i.e. semi permeability).
Note that a 0-SBM is simply a Wiener process and a $\pm$ $1$-SBM is a positively/negatively reflected Wiener process.
The definition of M-SBM is illustrated in Figure \ref{Fig:StandardFramework}, representing a typical example of M-SBM.
The standard definition of a skew Brownian motion has a drift term $\mu \in \mathbb{R}$ making it no longer, strictly speaking, Brownian motion.
\end{remark}

\begin{figure}[ht]
   \centering
 \includegraphics[width=\linewidth*3/4]{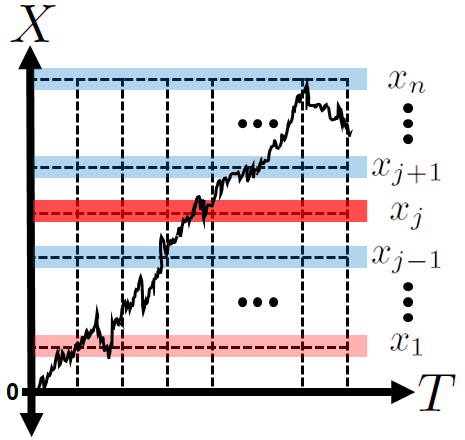}
   \caption{Example Standard M-SBM Framework}
   \label{Fig:StandardFramework}
\flushleft
   \textbf{\footnotesize \noindent{(a). Red barriers have permeability values $\boldsymbol{\beta_j < 0}$ so reflect to the left (upwards). The more negative the value is within $\boldsymbol{\beta_j \in [-1, 0)}$, the more reflective the barrier is (i.e. the less permeable it is).\\
(b). Blue barriers have permeability values $\boldsymbol{\beta_j > 0}$ so reflect to the right (downwards).
The more positive the value is within $\boldsymbol{\beta_j \in (0, 1]}$, the more reflective the barrier (i.e. the less permeable it is).\\
(c). Barriers that have permeability values $\boldsymbol{\beta_j = 0}$ are not depicted since they have no (constraining) effect.
}}
\end{figure}
\FloatBarrier

\bigskip \noindent
The M-SBM of Definition \ref{Def:3_3} allows any barrier combination to be either fully reflective or semipermeable.

\bigskip
\begin{remark}
If the permeability is $\beta_j =1$ at the barrier $x_j$ for some $j \in \{ 1,...,j_1,j,j+1,...,n\}$ and the initial position is $x_0 \in (x_j, +\infty)$, then the lower barriers $x_1,...,x_{j-1}$ will almost surely be never reached \cite{Mazzonetto2016}.
For this to happen, it must be that $\beta_j = -1$, so that as the It\^{o} diffusion descends (down) to $x_j$, it is fully reflected back (up).
This of course assumes that the diffusion coefficient $\sigma$ is `relatively small' and so allows the It\^{o} diffusion to be `well behaved' and never `jump over' and go below the $x_j$ barrier, as also illustrated in Figure \ref{Fig:StandardFramework}.
\end{remark}

\bigskip \noindent
\begin{theorem}\label{Thm:3_7}
\textbf{(Multiple Barriers of M-SBM Merging to One, adapted from \citet{Mazzonetto2016})}.
Before expressing the skewness parameter $\beta$ for a general number of barriers $n$, we derive $\beta$ for the first two simplest scenarios.

\bigskip \noindent
If $n=2$, $\beta_1, \beta_2 \in [-1,1]$, $\mu \in \mathbb{R}$ and $x^{(n)}_{2}=x_1 + \frac{1}{n}$, $\forall n \in \mathbb{N}$.
Let,

\[
 \beta := \frac{\beta_1 + \beta_2}{1 + \beta_1 \beta_2}.
\]

%

\bigskip \noindent
Let us denote by $(X^{(n)}_{t})_t$ the $(\beta_1, \beta_2)$-SBM with drift $\mu$, barriers $x_1, ..., x_n$, and denote by $(Y_t)_t$ the 1-SBM with drift $\mu$, and barrier $x_1$.
Let us assume $X^{(n)}_0 = Y_0$, then $X^{(n)}$ converges to $Y$ in the following sense,

\[
 \mathbb{E} \left[ \sup_{s \in [0,t]} |X^{(m)}_s - Y_s| \right] \xrightarrow{m \to \infty} 0, \text{ } \forall t \geq 0.
\]

\bigskip \noindent
The same holds in the case of $n > 2$ barriers merging.
In this case $(X^{(n)}_t )_t$ is the $(\beta_1,...,\beta_n)$-SBM with drift $\mu \in \mathbb{R}$, skewness parameters
$\beta_1,...,\beta_n \in [-1, 1]$ and barriers $x_1 \in \mathbb{R}$, $x_{j+1} := \frac{j}{n} + x_1, \text{ } \forall j \in \{ 1,...,n-1 \}$.

\bigskip \noindent
The skewness parameter $\beta$ of the limit $1$-SBM is given by,

\begin{equation}\label{Eq:Beta}
\beta := \frac{\displaystyle \prod^{n}_{j=1} (1+\beta_j) - \prod^{n}_{j=1} (1-\beta_j)}
                   {\displaystyle \prod^{n}_{j=1} (1+\beta_j) + \prod^{n}_{j=1} (1-\beta_j)}.
\end{equation}

\bigskip \noindent
If $n$ is even,

\begin{equation}\label{Eq:Even}
\beta = \frac{\displaystyle \sum^{n}_{j=1} \beta_j + \sum_{j_1 < j_2 < j_3} \beta_{j_1}\beta_{j_2}\beta_{j_3} +...+ \sum_{j_1 < ... < j_{n-1}} \beta_{j_1}...\beta_{j_{n-1}}}
                  {\displaystyle 1 + \sum_{j_1 < j_2} \beta_{j_1}\beta_{j_2} + \sum_{j_1 < ... < j_4} \beta_{j_1}\beta_{j_2}\beta_{j_3}\beta_{j_4} +...+ \beta_{1}\beta_{2}...\beta_{j_n}}.
\end{equation}

\bigskip \noindent
If $n$ is odd,

\begin{equation}\label{Eq:Odd}
\beta = \frac{\displaystyle \sum^{n}_{j=1} \beta_j + \sum_{j_1 < j_2 < j_3} \beta_{j_1}\beta_{j_2}\beta_{j_3} +...+ \beta_{j_1}...\beta_{j_n}}
                  {\displaystyle 1 + \sum_{j_1 < j_2} \beta_{j_1}\beta_{j_2} + \sum_{j_1 < ... < j_4} \beta_{j_1}\beta_{j_2}\beta_{j_3}\beta_{j_4} +...+ \sum_{j_1<...<j_{n-1} }\beta_{1}\beta_{2}...\beta_{j_{n-1} }}.
\end{equation}

\end{theorem}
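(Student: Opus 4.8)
The plan is to split the statement into an analytic part --- the algebraic identity linking the product formula (\ref{Eq:Beta}) to the elementary-symmetric expansions (\ref{Eq:Even}) and (\ref{Eq:Odd}) --- and a probabilistic part --- the actual convergence of the coalescing $(\beta_1,\ldots,\beta_n)$-SBM to a $1$-SBM with the asserted skewness. I would dispatch the analytic part first, since it is self-contained. Writing $e_k=e_k(\beta_1,\ldots,\beta_n)$ for the $k$-th elementary symmetric polynomial, one has $\prod_{j=1}^n(1+\beta_j)=\sum_{k=0}^n e_k$ and $\prod_{j=1}^n(1-\beta_j)=\sum_{k=0}^n(-1)^k e_k$, so the numerator of (\ref{Eq:Beta}) equals $2(e_1+e_3+\cdots)$ and the denominator equals $2(1+e_2+e_4+\cdots)$; truncating the odd/even tails at $e_n$ or $e_{n-1}$ according to the parity of $n$ yields exactly (\ref{Eq:Odd}) and (\ref{Eq:Even}). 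It is also worth recording the substitution $\beta_j=\tanh\gamma_j$, under which the right-hand side of (\ref{Eq:Beta}) collapses to $\tanh(\gamma_1+\cdots+\gamma_n)$; this both exhibits $\beta\in[-1,1]$ and makes the composition rule transparent, since for $n=2$ it is precisely the addition formula $\tanh(\gamma_1+\gamma_2)=\frac{\tanh\gamma_1+\tanh\gamma_2}{1+\tanh\gamma_1\tanh\gamma_2}=\frac{\beta_1+\beta_2}{1+\beta_1\beta_2}$.

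For the probabilistic core I would work through the scale-function description of one-dimensional skew diffusions. A $(\beta_1,\ldots,\beta_n)$-SBM with drift $\mu$ and diffusion $\sigma$ is, after removing the drift by Girsanov (or by carrying it along in the generator $\tfrac12\sigma^2\partial_{xx}+\mu\partial_x$), a regular diffusion whose scale function $s_n$ is continuous and piecewise affine, with a kink at each barrier $x_j$ at which the right/left slope ratio is $(1-\beta_j)/(1+\beta_j)$, and whose speed measure is a constant multiple of Lebesgue measure off the barriers. Under the scaling $x_{j+1}=x_1+j/n$ the cluster of barriers has diameter $\to 0$, so $s_n\to s_\infty$ locally uniformly, where $s_\infty$ is affine on each side of $x_1$ with the single slope ratio $\prod_{j=1}^n\frac{1-\beta_j}{1+\beta_j}$; solving $\frac{1-\beta}{1+\beta}=\prod_{j=1}^n\frac{1-\beta_j}{1+\beta_j}$ for $\beta$ reproduces (\ref{Eq:Beta}), so $s_\infty$ is the scale function of the $1$-SBM $Y$ with the claimed parameter. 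The speed measures converge likewise, and I would invoke the standard continuity of the map $(\text{scale},\text{speed})\mapsto$ diffusion (equivalently, convergence of the associated Feller generators / Dirichlet forms, with the transmission condition $(1+\beta)f'(x_1+)=(1-\beta)f'(x_1-)$ arising in the limit domain) to conclude $X^{(n)}\Rightarrow Y$ in $C([0,t])$ for every $t$. The case $n=2$ and the induction on the number of merging barriers then require nothing extra: the slope ratios simply multiply.

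To upgrade this weak convergence to the stated $\mathbb{E}\big[\sup_{s\le t}|X^{(m)}_s-Y_s|\big]\to 0$ I would build all the processes on one probability space through the time-change/scale representation $X^{(n)}_s=s_n^{-1}(W_{\tau^n_s})$ driven by a common Wiener process $W$ and the usual additive-functional time change $\tau^n$; since $s_n^{-1}\to s_\infty^{-1}$ uniformly on compacts and the $\tau^n$ converge (monotone convergence of the defining additive functionals), one gets $\sup_{s\le t}|X^{(m)}_s-Y_s|\to 0$ in probability, and a uniform-in-$m$ moment bound --- available from the linear growth of the coefficients together with Burkholder--Davis--Gundy, exactly as for It\^o diffusions --- supplies the uniform integrability needed to pass to $L^1$. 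The main obstacle is this last, quantitative step: the clean Gronwall argument one would like to run directly on the SDE of Definition \ref{Def:3_3} is blocked by the singular local-time terms $\beta_j\,dL^{x_j}_t$, so the real work lies either in justifying the coupling and the convergence of the time changes uniformly in $n$, or in replacing Gronwall by a comparison of symmetric local times at the coalescing points --- in the spirit of the pathwise-uniqueness arguments of Le Gall and of Bass--Chen for SBM --- controlling $\sum_j\beta_j L^{x_j}$ against $\beta L^{x_1}$ as the $x_j$ collapse. Everything else --- tightness, identification of the limit, and the passage from two barriers to general $n$ --- is routine once the scale-function picture is in place.
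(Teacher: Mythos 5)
The paper does not actually prove this theorem: its entire ``proof'' is the sentence ``Refer to \cite{Mazzonetto2016} and \cite{LeGall1984}.'' Your proposal is therefore doing work the paper declines to do, and it is a faithful reconstruction of the argument that lives in those references. The algebraic half is complete and correct: writing $\prod_{j}(1\pm\beta_j)=\sum_k(\pm1)^k e_k$ gives $\beta=\bigl(\sum_{k\ \mathrm{odd}}e_k\bigr)/\bigl(\sum_{k\ \mathrm{even}}e_k\bigr)$, which is exactly (\ref{Eq:Even}) and (\ref{Eq:Odd}) after truncating by parity, and the $\tanh$ reparametrisation is a genuinely clarifying addition (it shows at a glance that the merge rule is associative and that $\beta\in[-1,1]$). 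The probabilistic half via scale functions --- kink of slope ratio $(1-\beta_j)/(1+\beta_j)$ at each barrier, ratios multiplying as the barriers coalesce, and $\frac{1-\beta}{1+\beta}=\prod_j\frac{1-\beta_j}{1+\beta_j}$ recovering (\ref{Eq:Beta}) --- is precisely Le Gall's space-transformation method and the route taken in Mazzonetto's thesis, so you are not on a different road from the sources, only from the paper's non-proof.

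Two caveats. First, the step you yourself flag as ``the real work'' --- upgrading weak convergence to $\mathbb{E}\bigl[\sup_{s\le t}|X^{(m)}_s-Y_s|\bigr]\to0$ --- is still only a sketch: the coupling through a common driving Wiener process and the uniform convergence of the time changes $\tau^n$ is exactly where Le Gall's local-time comparison estimates are needed, and without them the argument is incomplete rather than wrong. Second, the statement allows $\beta_j\in[-1,1]$ including the endpoints; there the slope ratio is $0$ or $\infty$, the $\tanh$ substitution requires $\gamma_j=\pm\infty$, and if some $\beta_j=1$ while another $\beta_k=-1$ both products in (\ref{Eq:Beta}) vanish and the formula reads $0/0$. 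Your scale-function argument needs a separate (easy, but explicit) treatment of the fully reflecting cases --- which is consistent with the paper's own Lemma \ref{Lem:3_7}, where a barrier with $|\beta_j|=1$ simply dominates and the merge formula is moot.
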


\begin{proof}
Refer to \cite{Mazzonetto2016} and \cite{LeGall1984}.
\end{proof}

\bigskip \noindent
The M-SBM framework also only considers one half-plane at a time, so that the transition density (or distribution) of the upper plane is assumed to be the same for the lower half plane, which is not always the case (except for BGCSP).
We show below that whilst BGCSPs are a special case of M-SBMs, they have some unique properties that make them of particular interest among the larger class.

\bigskip
\subsection{Constructing BGC Stochastic Processes}

\noindent
We can compliment Mazzonetto by condensing all possible local barrier combinations to the following four possible global barrier combinations that comprise a lower barrier $\alpha_j$ and an upper barrier $\alpha_k$, as shown in Figure \ref{Fig:GeneralisedBarrierCombinationArguments}.

\begin{figure}[ht]
   \centering
 \includegraphics[width=\linewidth/2 - \linewidth/60]{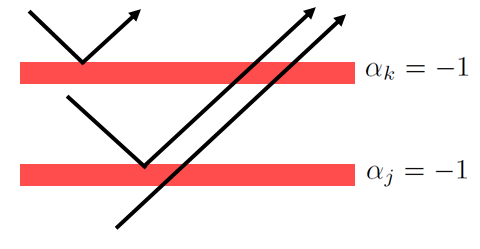}
 \includegraphics[width=\linewidth/2 - \linewidth/60]{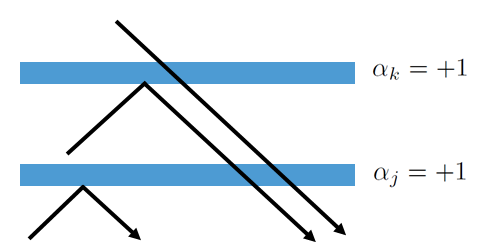}\\
\textbf{\footnotesize \noindent{(a). $\quad \quad \quad \quad \quad \quad \quad \quad \quad \quad \quad \quad \quad \quad \quad \quad$ (b). \quad \quad}}\\
 \includegraphics[width=\linewidth/2 - \linewidth/60]{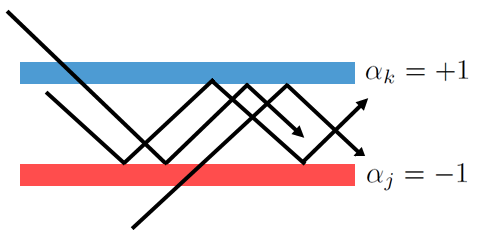}
 \includegraphics[width=\linewidth/2 - \linewidth/60]{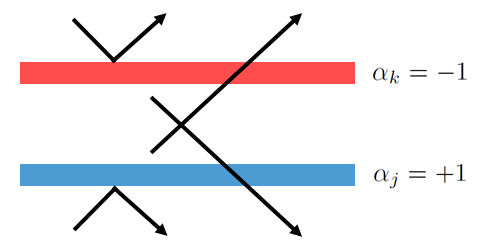}\\
\textbf{\footnotesize \noindent{(c). $\quad \quad \quad \quad \quad \quad \quad \quad \quad \quad \quad \quad \quad \quad \quad \quad$ (d). \quad \quad}}\\
  \caption{Generalised Barrier Combination Arguments}
   \label{Fig:GeneralisedBarrierCombinationArguments}
\flushleft
  \textbf{\footnotesize \noindent{(a). All It\^{o} diffusions will almost surely end up \textbf{\textit{above}} the two negative fully reflective barriers.\\
(b). All It\^{o} diffusions will almost surely end up \textbf{\textit{below}} the two positive fully reflective barriers.\\
(c). All It\^{o} diffusions will almost surely end up \textbf{\textit{within}} the two fully reflective barriers.\\
(d). All It\^{o} diffusions will almost surely end up \textbf{\textit{above or below (but not in between)}} the two fully reflective barriers.
}}
\end{figure}
\FloatBarrier

\bigskip \noindent
The diagrammatic summary of possible cases represented in Figure \ref{Fig:GeneralisedBarrierCombinationArguments} is formally stated as Lemma \ref{Lem:3_7} which is then used in Theorem \ref{SkewnessParameterofBGCStochasticProcesses}.
This Theorem formally expresses that the barriers of an M-SBM merge to a 1-SBM, to which the process converges.

\bigskip
\begin{lemma}\label{Lem:3_7}
If any $|\alpha_j|<1$ for some $|\alpha_k|=1$, or similarly for any $|\alpha_k|<1$, then the barrier $\pm 1$ will dominate the barrier $\ne \pm 1$, almost surely, as shown in Figure \ref{Fig:GeneralisedBarrierCombinationArguments}.
Furthermore, if there are more than two fully reflective barriers, they will merge and effectively operate as one of the four possible combinations of Figure \ref{Fig:GeneralisedBarrierCombinationArguments}.
\end{lemma}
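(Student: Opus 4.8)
\noindent
The plan is to split the statement into two claims and prove each by combining the merging formula \eqref{Eq:Beta} of Theorem~\ref{Thm:3_7} with the impassability of a fully reflective barrier. Claim~1: a single fully reflective barrier ($|\alpha_k|=1$) dominates every semipermeable barrier ($|\alpha_j|<1$). Claim~2: two or more fully reflective barriers reduce to one of the four configurations of Figure~\ref{Fig:GeneralisedBarrierCombinationArguments}. Throughout I identify the barrier parameters $\alpha_j$ here with the skewness coefficients $\beta_j$ appearing in \eqref{Eq:Beta}, and I use the fact---implicit in the local-time / Skorokhod representation of the barrier and recorded in the Remark after Definition~\ref{Def:3_3}---that an M-SBM settles, almost surely, on a prescribed side of each of its fully reflective barriers (above an $\alpha=-1$ barrier, below an $\alpha=+1$ barrier, as drawn in Figure~\ref{Fig:GeneralisedBarrierCombinationArguments}).

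\noindent
For Claim~1 I would argue on two levels. Algebraically: in \eqref{Eq:Beta}, if $\alpha_k=1$ the factor $1-\alpha_k$ vanishes, so $\prod_j(1-\alpha_j)=0$ and hence, by \eqref{Eq:Beta}, $\beta=1$ provided no $\alpha_i=-1$; symmetrically $\alpha_k=-1$ forces $\beta=-1$. Thus in the merged limit the skewness is entirely determined by the fully reflective barrier, irrespective of $\alpha_j$. Pathwise: since the path stays on the prescribed side of $x_k$, any barrier $x_j$ on the side of $x_k$ the path never visits is never reached, so $\tau_{x_j}=\infty$ a.s., the term $\alpha_j\,dL^{x_j}_{t}$ is never activated, and $x_j$ may be deleted from $\mathcal{E}$ without changing the law of $X$---exactly the pictures in Figure~\ref{Fig:GeneralisedBarrierCombinationArguments}. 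The lone algebraic caveat is the simultaneous presence of a $+1$ and a $-1$ barrier, where \eqref{Eq:Beta} degenerates to $0/0$; this configuration is not a single-$1$-SBM limit but a confined (resp. repelled) diffusion, and it is handled by the reflected-SDE description of \eqref{Eq:3alpha} rather than by \eqref{Eq:Beta}.

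\noindent
For Claim~2, suppose $p\ge2$ of the barriers $x_1<\dots<x_n$ are fully reflective. Because the path ends up below every $+1$ barrier and above every $-1$ barrier, only the lowest $+1$ barrier and the highest $-1$ barrier are effective; all other fully reflective barriers (and, by Claim~1, all semipermeable ones beyond the effective pair) are redundant. This reduces the problem to a single pair $(\alpha_j,\alpha_k)\in\{-1,+1\}^2$, whose four sign combinations reproduce panels (a)--(d) of Figure~\ref{Fig:GeneralisedBarrierCombinationArguments}: the pairs $(-1,-1)$ and $(+1,+1)$ collapse via \eqref{Eq:Beta} to a single $\pm1$-SBM, the sign matching that of the barriers, with all paths ending up above, respectively below, both barriers; the mixed pair with the $-1$ barrier below the $+1$ barrier confines the path strictly between them (the two-sided reflected diffusion of \eqref{Eq:3alpha}); and the mixed pair with the $+1$ barrier below the $-1$ barrier repels the path out of the strip, so that it ends up above or below but never in between. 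These four cases are mutually exclusive and exhaustive, which is the assertion of the Lemma.

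\noindent
The main obstacle I anticipate is making ``the path settles a.s. on a prescribed side of each fully reflective barrier, hence everything beyond it is inert'' fully rigorous in the presence of the drift $\mu$ and a general diffusion coefficient $\sigma$: for $\alpha=\pm1$ the defining equation is a Skorokhod problem whose pathwise solution stays on one side, and bounded drift does not destroy this, but the degenerate starting configurations---$x_0$ sitting on a barrier, or $x_0$ lodged inside the forbidden strip of panel (d)---must be treated separately, since there the side on which the path settles is not determined by the SDE alone; I would handle this either by restricting to $x_0$ off the barrier set or by invoking pathwise uniqueness of the reflected SDE. A secondary, purely bookkeeping, difficulty is that the algebraic domination via \eqref{Eq:Beta} presupposes the contracting-barrier regime of Theorem~\ref{Thm:3_7}, so the discrete picture of Figure~\ref{Fig:GeneralisedBarrierCombinationArguments} must be phrased consistently with that limit.
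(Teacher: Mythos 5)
Your proposal is correct in substance but follows a genuinely different route from the paper. The paper's proof does not invoke the merging formula \eqref{Eq:Beta} at all: instead it sets up a pair of SDEs \eqref{Eq:Difference_1}--\eqref{Eq:Difference_2}, one carrying only the fully reflective barrier(s) and one carrying all barriers, introduces the aggregate permeability $\delta = \sum_{i=1}^{n}\alpha_i$ of the semipermeable barriers, and argues by cases on $\delta<1$ versus $\delta\geq 1$ that either the semipermeable contribution vanishes (so $\sup_{t\rightarrow\infty}\{|X_t|-|Y_t|\}=0$) or it merges into the reflective barrier; the $N\geq 3$ case is then dispatched by asserting the result is a linear combination of the two-barrier configurations. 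This ties the lemma to the Portenko-type criterion $|\sum_i\alpha_i|\geq 1$ highlighted in the subsequent Remark, which your argument does not touch. Your route instead rests on two pillars the paper does not use here: the algebraic observation that $\alpha_k=\pm 1$ annihilates one of the products in \eqref{Eq:Beta} and so pins the limiting skewness at $\pm 1$, and the pathwise/Skorokhod observation that barriers on the unvisited side of a fully reflective barrier are never activated. Your version is closer to the LeGall--Mazzonetto machinery already quoted as Theorem \ref{Thm:3_7}, gives an explicit and exhaustive four-case classification matching panels (a)--(d) of Figure \ref{Fig:GeneralisedBarrierCombinationArguments}, and is more candid about the degenerate configurations (simultaneous $+1$ and $-1$ barriers making \eqref{Eq:Beta} read $0/0$, and initial data on a barrier or inside the repelling strip) that the paper's proof passes over silently. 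The one caveat you correctly flag yourself is real: the domination-via-\eqref{Eq:Beta} step is only literally valid in the contracting-barrier regime of Theorem \ref{Thm:3_7}, whereas the lemma concerns barriers at fixed separated positions, so for those the pathwise half of your argument has to carry the full weight; the paper's $\delta$-comparison, whatever its rigour, is at least phrased directly for fixed barriers.
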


\begin{proof}
We first assume that there exists only one reflective barrier $| \alpha_j |=1$ and $n$ semipermeable barriers $| \alpha_j |< 1$.
We then consider the two SDEs;

\begin{equation}\label{Eq:Difference_1}
\begin{array}{rcl}
  X_t & = & \mu_1 \, dt + \sigma_1 \, dW_t + \underbrace{ \alpha_1 \, dL^{\alpha_1}_{t} }_{| \alpha_i | = 1} + \underbrace{\alpha_2 \, dL^{\alpha_2}_{t} +...+ \alpha_n \, dL^{\alpha_n}_{t} }_{| \alpha_i | < 1}   \\
  Y_t & = & \mu_2 \, dt + \sigma_2 \, dW_t + \underbrace{ \alpha_1 \, dL^{\alpha_1}_{t} }_{| \alpha_i | = 1}
\end{array},
\end{equation}

\bigskip \noindent
where $Y_t$ is an unconstrained It\^{o} diffusion and $X_t$ is a constrained It\^{o} diffusion according to the above barrier constraints.
Let $\delta = \sum^{n}_{i=1} \alpha_i$, so $\delta < 1$ or $\delta \geq 1$.

\bigskip \noindent
If $\delta < 1$, then $\alpha_i$ will dominate $\delta$ as it will vanish (i.e. $\delta \rightarrow 0$) such that, 

\[
\displaystyle \sup_{t \rightarrow \infty} \Big\{ | X_t | - | Y_t | \Big\} = 0.
\]

\bigskip \noindent
If $\delta \geq 1$, then $\delta$ will dominate $\alpha_1$ and merge (i.e. $\alpha_1 \rightarrow \delta$) such that, 

\[
\displaystyle \sup_{t \rightarrow \infty} \Big\{ | X_t | - | Y_t | \Big\} = X_t.
\]

\bigskip \noindent
Next, assume that there exist two fully reflective barriers $| \alpha_j | = 1$, $| \alpha_k | = 1$ and $n$ semipermeable barriers $| \alpha_i | < 1$.
(\ref{Eq:Difference_1}) now equates to,

\begin{equation}\label{Eq:Difference_2}
\begin{array}{rcl}
  X_t & = & \mu_1 \, dt + \sigma_1 \, dW_t + \underbrace{ \alpha_1 \, dL^{\alpha_1}_{t} }_{| \alpha_i | = 1} + \underbrace{\alpha_2 \, dL^{\alpha_2}_{t} +...+ \alpha_n \, dL^{\alpha_n}_{t} }_{| \alpha_j | = 1, \text{ } | \alpha_k | = 1}       \\
&   &
 +  \underbrace{ \alpha_1 \, dL^{\alpha_1}_{t} }_{| \alpha_i | = 1} + \underbrace{\alpha_2 \, dL^{\alpha_2}_{t} +...+ \alpha_n \, dL^{\alpha_n}_{t} }_{\delta = | \alpha_i | < 1}                         \\
  Y_t & = & \mu_2 \, dt + \sigma_2 \, dW_t + \underbrace{ \alpha_1 \, dL^{\alpha_1}_{t} }_{\delta = | \alpha_i | < 1}
\end{array}.
\end{equation}

\bigskip \noindent
If $\delta < 1$, then $\alpha_j$ and (or) $\alpha_k$ will dominate $\delta$ and as it will vanish (i.e. $\delta \rightarrow 0$) and if $\delta \geq 1$, then $\delta$ will dominate $\alpha_j$ and (or) $\alpha_k$ hence merge to $\alpha_j$ and $\alpha_k$, such that $\sup_{t \rightarrow \infty} \Big\{ | X_t | - | Y_t | \Big\} = 0$ and $\sup_{t \rightarrow \infty} \Big\{ | X_t | - | Y_t | \Big\} = X_t$, respectively.

\bigskip \noindent
Finally, if there are more than $N \geq 3$ fully reflective barriers $| \alpha_i | = 1$ and $n$ semipermeable barriers, then the new barriers will effectively be a linear combination of any two possible combinations in Figure \ref{Fig:GeneralisedBarrierCombinationArguments}, depending on how the fully reflective barriers of $N$ are defined, completing the proof for all scenarios.
\end{proof}

\bigskip \noindent
To contrast Figure \ref{Fig:DiffusionBetweenTwoConstantReflectiveBarriers} for two reflective constant barriers, for BGCSP we have two hidden reflective barriers which also constrain the interior between the boundaries, as shown in Figure \ref{Fig:DiffusionBetweenTwoBGCReflectiveBarriers}, where (a) shows the multiple barriers in $\mathbb{R}^2$, and (b) shows how the multiple barriers are projected from $\mathbb{R}^3$.

\begin{figure}[ht]
   \centering
 \includegraphics[width=\linewidth]{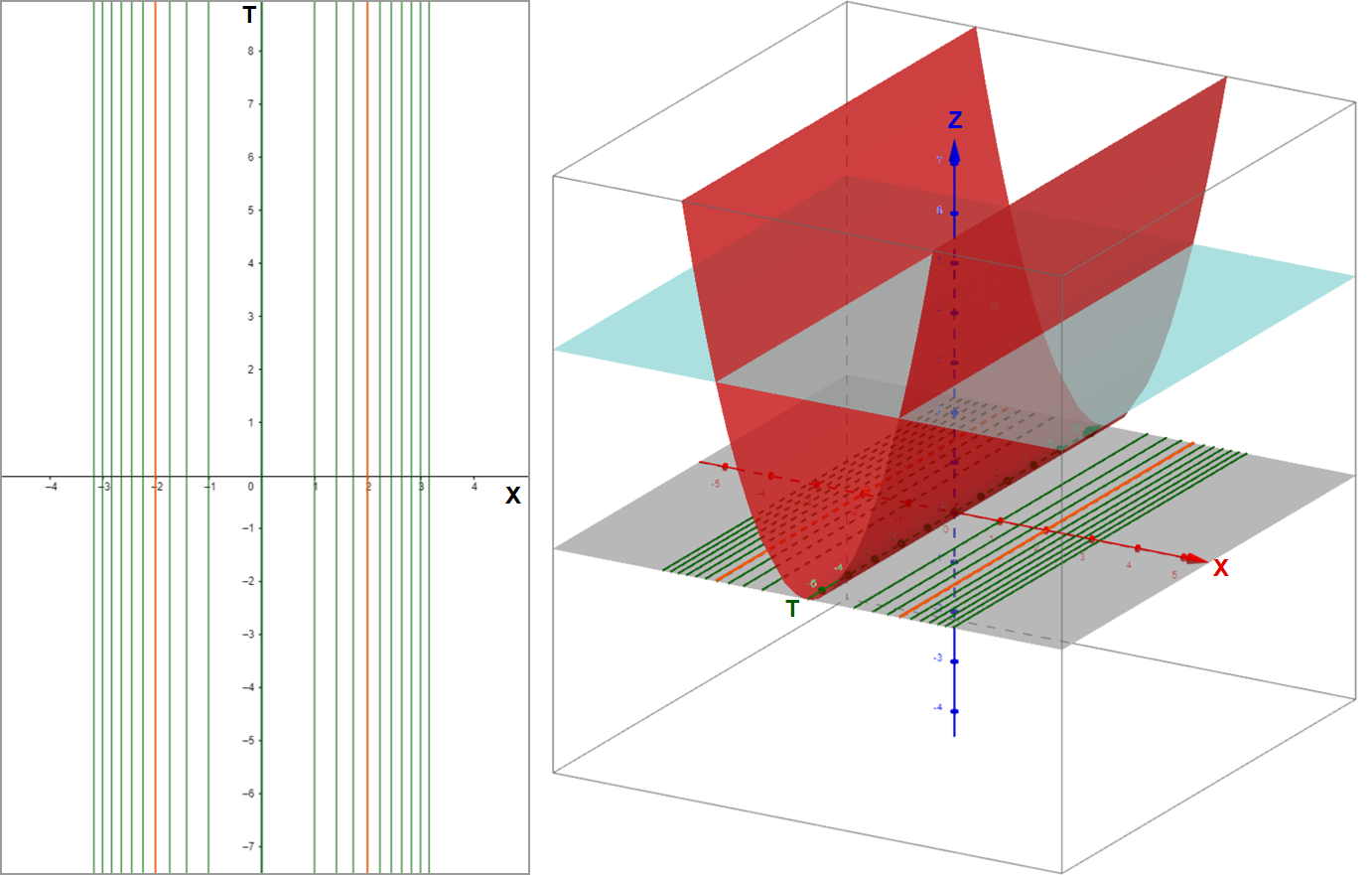}
   \textbf{\footnotesize \noindent{(a). Barrier in $\boldsymbol{\mathbb{R}^2}$ - Contour plot $\quad \quad$ (b). Barrier in $\boldsymbol{\mathbb{R}^3}$- Contour \& surface plot}}
   \caption{Diffusion Between Two BGC Reflective Barriers}
   \label{Fig:DiffusionBetweenTwoBGCReflectiveBarriers}
\flushleft
   \textbf{\footnotesize \noindent{As the light blue plane $\boldsymbol{z = k}$ for $\boldsymbol{k \in \mathbb{R}_{+}}$ descends to the origin, the green (and orange) contour lines of the BGC reflective barriers change in accordance with the red BGC function $\boldsymbol{\Psi (X_t, t)}$.
}}
\end{figure}

\bigskip \noindent
Leveraging the work of \citet{Ramirez2011}, we partition $X$ into countably infinite intervals $I_k = (x_k, x_{k+1})$, $\forall k \in \mathbb{R}$ forming the sequence $\{ I_{-\infty},...,I_{-1},I_{1},...,I_{\infty} \}$ such that the standard conditions are met; $I_k \bigcap I_{k+1} = \varnothing \text{ }\text{ } \forall k \in \mathbb{R}$, $\varnothing \notin X$ and $\bigcup^{\infty}_{k=-\infty} I_k = X$.
We wish to shrink the size of each interval $| I_k | = | x_{k+1} - x_k |$ to zero as we apply more and more intervals, where $\lim_{k \rightarrow \infty} | x_{k+1} - x_k | \rightarrow 0$ and $\int^{\infty}_{-\infty} I_k \, dk < \infty$.
This is because we wish to constrain the It\^{o} diffusion by the BGC function $\Psi (X_t, t) \in \mathbb{R}$.

\bigskip \noindent
In terms of BGC stochastic processes, we effectively have a $(\beta_{-n}, ..., \beta_{-1}, \beta_1, ..., \beta_n)$-SBM and will express it as,

\bigskip
\bigskip
\begin{eqnarray}\label{Eqn:3_9}
 & & {
\begin{cases}
   \begin{array}{ccl}
\displaystyle  dX_t           & = & \mu \, dt + \sigma \, d W_t 
+ \underbrace{\underbrace{\sum^{-1}_{j=-n} \beta_{j} \, dL^{x_{j} }_{t}}_{<0}
+ \underbrace{\sum^{n }_{j=1 } \beta_{j} \, dL^{x_{j} }_{t} }_{>0}}_{\displaystyle \Psi(X_t, t)} \\
\displaystyle X_0             & = & 0   \\
\displaystyle \mathcal{E} & = &   \big\{ \mu, \sigma, ( \beta_{-n}, ..., \beta_{-1}, \beta_{1},..., \beta_n ) \big\} \in \mathbb{R} \\
\displaystyle L^{x_j}_{t}  & = & \int ^{t}_{0} \mathds{1}_{ \{ X_s = z_j \} } \, d L^{x_j}_{s},  \text{ }    j \in \{ -n,..., -1,1, ..., n \}
   \end{array},
\end{cases}
}
\end{eqnarray}

\bigskip \noindent
as illustrated in Figure \ref{Fig:Framework}.

\begin{figure}[ht]
   \centering
 \includegraphics[width=\linewidth - \linewidth/4]{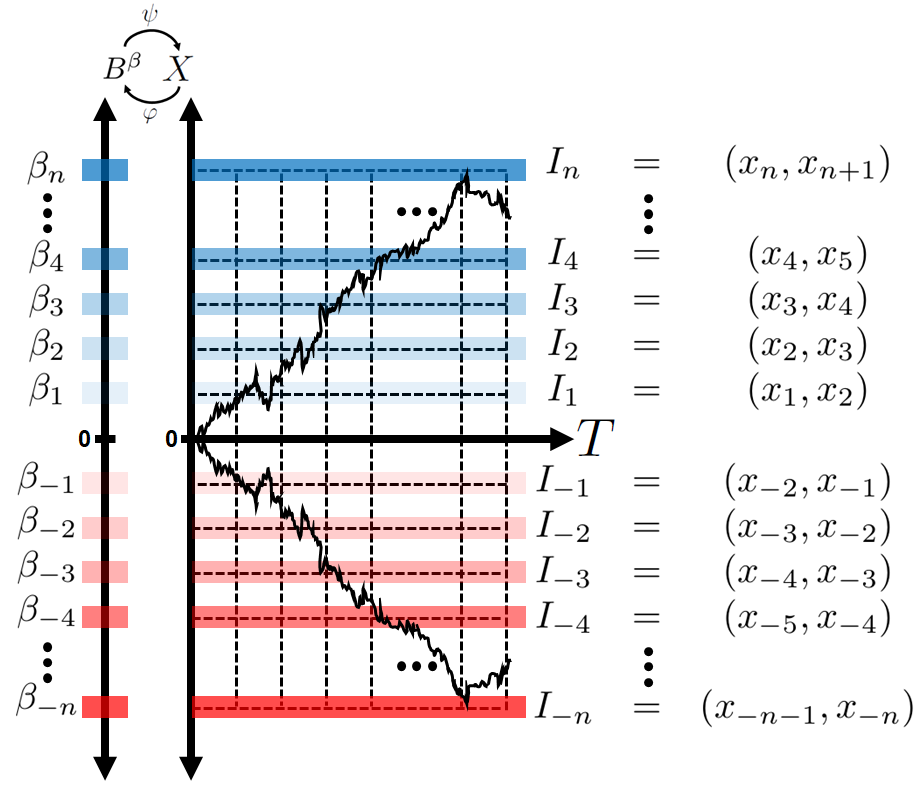}
   \caption{Constructing BGC Stochastic Processes from M-SBM Framework}
   \label{Fig:Framework}
\flushleft
   \textbf{\footnotesize \noindent{As the It\^{o} process reaches further and further intervals $\boldsymbol{I_k}$ from the origin, the intervals' permeability decreases and is scaled by $\boldsymbol{\beta_k}$.
The less permeable the interval becomes, the more it operates as a reflective barrier.
Eventually, there exists an interval that for the given It\^{o} diffusion is effectively fully reflective, forming the hidden barrier of BGC stochastic processes.
}}
\end{figure}

\bigskip \noindent
\begin{theorem}\label{SkewnessParameterofBGCStochasticProcesses}
\textbf{(Skewness Parameter of BGC Stochastic Processes)}. Let us denote by $(X^{(n)}_{t})_t$ the $(\beta_{-n}, ...,\beta_n)$-SBM with drift $\mu$ and barriers $x_{-n}, ..., x_n$, and denote by $(Y_t)_t$ the 2-SBM (i.e. $\beta_1, \beta_2$-SMB) with drift $\mu$, diffusion $\sigma$ and barrier $x_1$.
Let us assume $X^{(n)}_0 = Y_0$, then $X^{(n)}$ converges to $Y$ in the following sense,

\[
 \lim_{n \to \infty} \left\{ \mathbb{E} \Big[ \sup_{s \in [0,t]} |X^{(n)}_s - Y_s| \Big] \right\} = 0, \text{ } \forall t \in [0,T].
\]

\bigskip \noindent
The same holds in the case of $n > 2$ barriers merging.
In this case $(X^{(n)}_t )_t$ is the $(\beta_{-n},...,\beta_n)$-SBM with drift $\mu \in \mathbb{R}$, diffusion $\sigma \in \mathbb{R}$, skewness parameters
$\beta_{-n},...,\beta_n \in [-1, 1]$ and barrier $x_1 \in \mathbb{R}$, $x_{j+1} := \frac{j}{n} + x_1, \text{ } \forall j \in \{ 1,...,n-1 \}$.
Then $\beta=0$. 
\end{theorem}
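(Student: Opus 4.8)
The plan is to deduce both assertions from results already in hand: the merging-and-convergence Theorem~\ref{Thm:3_7} (after \cite{Mazzonetto2016,LeGall1984}) and the domination Lemma~\ref{Lem:3_7}. First I would dispatch the convergence statement. By Lemma~\ref{Lem:3_7}, any configuration of the barriers $x_{-n}<\cdots<x_{-1}<x_1<\cdots<x_n$ appearing in~(\ref{Eqn:3_9}) reduces, as far as constraining a given It\^{o} diffusion is concerned, to one of the four canonical combinations of Figure~\ref{Fig:GeneralisedBarrierCombinationArguments}, so no pathology arises from having unboundedly many barriers. Since the BGC array is symmetric about the origin with $X_0=0$ lying strictly between the two halves, I would apply the one-sided estimate of Theorem~\ref{Thm:3_7} to the positive family $\{x_j\}_{j\ge1}$ and, after the reflection $x\mapsto-x$, to the negative family $\{x_j\}_{j\le-1}$, and then splice the two resulting $L^1$ bounds on $[0,t]$ using the strong Markov property at the last visit to $0$. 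This yields $\mathbb{E}\big[\sup_{s\in[0,t]}|X^{(n)}_s-Y_s|\big]\to0$ with $Y$ the limiting two-sided process, as stated.

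The substance is the value $\beta=0$, which I would obtain by a symmetry argument from the product formula~(\ref{Eq:Beta}). The structural input is that, because the BGC surface $\Psi$ is an even, convex function of $X_t$ and the process starts at $X_0=0$, the permeability coefficient attached to the $j$-th discretising barrier above the origin is the negative of the one attached to the $j$-th barrier below it, i.e.\ $\beta_{-j}=-\beta_j$ for every $j\in\{1,\dots,n\}$; this is the discrete shadow of the fact that the $\sgn[X_t]$ factor in~(\ref{Eq:BGC}) and~(\ref{Eqn:3_9}) always aims the constraint toward the origin with equal strength on the two sides. Writing $J=\{-n,\dots,-1,1,\dots,n\}$ and pairing $j$ with $-j$, one gets $\prod_{j\in J}(1+\beta_j)=\prod_{j=1}^{n}(1-\beta_j^{2})=\prod_{j\in J}(1-\beta_j)$, so the numerator of~(\ref{Eq:Beta}) vanishes identically and $\beta=0$, for every $n$ and every profile $(\beta_1,\dots,\beta_n)$ induced by $\Psi$; the limit $n\to\infty$ is then immediate since the identity holds at each finite stage. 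The same pairing collapses the expanded forms~(\ref{Eq:Even}) and~(\ref{Eq:Odd}) to $0$ as well, which serves as a cross-check.

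I would close with the interpretation: since a $0$-SBM is just a Wiener process (cf.\ the remark after Definition~\ref{Def:3_3}), $\beta=0$ says that the \emph{net} skew of the merged two-sided barrier family is null --- the BGC constraint cancels in the local-time channel precisely because the construction is balanced about the origin, so, as anticipated in the Introduction, its effect must resurface through the drift/diffusion channel rather than as a genuine skew.

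The main obstacle I anticipate is not the algebra, which is routine once $\beta_{-j}=-\beta_j$ is granted, but two points of rigour: (i) deriving that antisymmetry from the definition of the BGC coefficient, i.e.\ showing that the discretisation of the even surface $\Psi(X_t,t)$ into local-time weights is itself antisymmetric and behaves uniformly in $n$; and (ii) verifying that Theorem~\ref{Thm:3_7}, which is stated for finitely many barriers accumulating at a single point, genuinely applies --- side by side, after reflection --- to the two-sided BGC array with $X_0=0$ in between, so that both the convergence and the limiting value of $\beta$ survive the passage $n\to\infty$.
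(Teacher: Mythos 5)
Your core argument for $\beta=0$ is the one the paper uses: everything rests on the antisymmetry $\beta_{-j}=-\beta_j$ forced by the evenness of $\Psi$ about the origin, which makes the relevant products of $(1\pm\beta_j)$ coincide and annihilates the numerator of the merging formula. The packaging differs, though. You apply the single formula (\ref{Eq:Beta}) once to the full two-sided index set $\{-n,\dots,-1,1,\dots,n\}$ and note that $\prod_{j}(1+\beta_j)=\prod_{j=1}^{n}(1-\beta_j^{2})=\prod_{j}(1-\beta_j)$, so the numerator vanishes. The paper instead \emph{defines} the limit skew of the BGCSP as the sum of two one-sided skew parameters, one for the positive barrier family and one (after reflection) for the negative family --- its equation (\ref{Eq:3_16}) --- puts the two fractions over a common denominator, substitutes $\prod^{-n}_{j=-1}(1\mp\beta_j)=\prod^{n}_{j=1}(1\pm\beta_j)$, and checks that the combined numerator cancels. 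Under the antisymmetry both computations return $0$, but they are not the same object in general: your version tacitly assumes all $2n$ barriers collapse to a single point, whereas the paper's sum-of-two-limits reflects the intended geometry in which the upper barriers merge to the hidden barrier $\mathfrak{B}_U$ and the lower ones to $\mathfrak{B}_L$, the limit being a 2-SBM rather than a 1-SBM. Your route is algebraically shorter; the paper's matches the statement of the theorem more closely.

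On the convergence claim, note that the paper gives no separate argument --- it simply inherits it from Theorem \ref{Thm:3_7} (i.e.\ from \cite{Mazzonetto2016} and \cite{LeGall1984}) --- so you are attempting more than the paper does. However, your proposed splice of two one-sided applications of Theorem \ref{Thm:3_7} ``using the strong Markov property at the last visit to $0$'' does not work as written: the last visit to $0$ before time $t$ is a last-exit time, not a stopping time, so the strong Markov property is unavailable there. This does not affect the $\beta=0$ computation, but if you want the $L^{1}$ convergence to be more than a citation you would need a different localisation (for instance an excursion decomposition away from $0$, or a direct Gronwall-type bound on the local-time terms).
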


\begin{proof}
\bigskip \noindent
In contrast to the skewness parameter of the limit $1$-SBM in (\ref{Eq:Beta}), the corresponding skewness parameter of the limit $2$-SBM for BGCSP is given by (\ref{Eq:3_16}),

\begin{equation}\label{Eq:3_16}
\begin{array}{rcl}
\beta  & := & \underbrace{\frac{\displaystyle \prod^{n}_{j=1} (1+\beta_j) - \prod^{n}_{j=1} (1-\beta_j)}
                   {\displaystyle \prod^{n}_{j=1} (1+\beta_j) + \prod^{n}_{j=1} (1-\beta_j)} }_{\boldsymbol{> 0} }
                    +
                   \underbrace{\frac{\displaystyle \prod^{-n}_{j=-1} (1+\beta_j) - \prod^{-n}_{j=-1} (1-\beta_j)}
                   {\displaystyle \prod^{-n}_{j=-1} (1+\beta_j) + \prod^{-n}_{j=-1} (1-\beta_j)} }_{\boldsymbol{< 0} }       \\
        & = & \resizebox{0.9\hsize}{!}{$
\frac{\left[ \displaystyle \prod^{n}_{j=1} (1+\beta_j) - \prod^{n}_{j=1} (1-\beta_j) \right]
\left[ \displaystyle \prod^{-n}_{j=-1} (1+\beta_j) + \prod^{-n}_{j=-1} (1-\beta_j) \right]
+ 
\left[ \displaystyle \prod^{n}_{j=1} (1+\beta_j) + \prod^{n}_{j=1} (1-\beta_j) \right]
\left[ \displaystyle \prod^{-n}_{j=-1} (1+\beta_j) - \prod^{-n}_{j=-1} (1-\beta_j) \right]}
                 {\left[ \displaystyle \prod^{n}_{j=1} (1+\beta_j) + \prod^{n}_{j=1} (1-\beta_j) \right]
\left[ \displaystyle \prod^{-n}_{j=-1} (1+\beta_j) + \prod^{-n}_{j=-1} (1-\beta_j) \right]}
$}                                           \\
        & = & \resizebox{0.9\hsize}{!}{$
\frac{\left[ \displaystyle
 \prod^{n}_{j=1} (1+\beta_j) \prod^{-n}_{j=-1} (1+\beta_j) 
+ \prod^{n}_{j=1} (1+\beta_j) \prod^{-n}_{j=-1} (1-\beta_j) 
- \prod^{n}_{j=1} (1-\beta_j) \prod^{-n}_{j=-1} (1+\beta_j)
- \prod^{n}_{j=1} (1-\beta_j) \prod^{-n}_{j=-1} (1-\beta_j)
 \right] }
{\left[ \displaystyle
 \prod^{n}_{j=1} (1+\beta_j) \prod^{-n}_{j=-1} (1+\beta_j) 
+ \prod^{n}_{j=1} (1+\beta_j) \prod^{-n}_{j=-1} (1-\beta_j) 
+ \prod^{n}_{j=1} (1-\beta_j) \prod^{-n}_{j=-1} (1+\beta_j)
+ \prod^{n}_{j=1} (1-\beta_j) \prod^{-n}_{j=-1} (1-\beta_j)
 \right] }
$}                      \\
&  & +
\resizebox{0.9\hsize}{!}{$
\frac{\left[ \displaystyle
 \prod^{n}_{j=1} (1+\beta_j) \prod^{-n}_{j=-1} (1+\beta_j) 
- \prod^{n}_{j=1} (1+\beta_j) \prod^{-n}_{j=-1} (1-\beta_j) 
+ \prod^{n}_{j=1} (1-\beta_j) \prod^{-n}_{j=-1} (1+\beta_j)
- \prod^{n}_{j=1} (1-\beta_j) \prod^{-n}_{j=-1} (1-\beta_j)
 \right] }
{\left[ \displaystyle
 \prod^{n}_{j=1} (1+\beta_j) \prod^{-n}_{j=-1} (1+\beta_j) 
+ \prod^{n}_{j=1} (1+\beta_j) \prod^{-n}_{j=-1} (1-\beta_j) 
+ \prod^{n}_{j=1} (1-\beta_j) \prod^{-n}_{j=-1} (1+\beta_j)
+ \prod^{n}_{j=1} (1-\beta_j) \prod^{-n}_{j=-1} (1-\beta_j)
 \right] }.
$}
\end{array}
\end{equation}

\bigskip \noindent
Noting that due to the symmetry of BGCSP about the origin,

\[
 \displaystyle   \prod^{-n}_{j=-1} (1-\beta_j) = \prod^{n}_{j=1} (1+\beta_j), \quad
 \displaystyle   \prod^{-n}_{j=-1} (1+\beta_j) = \prod^{n}_{j=1} (1-\beta_j),
\]

\bigskip \noindent
which allows the $\prod^{n}_{j=1} (1+\beta_j) $ terms to factor out in (\ref{Eq:3_16}) giving,

\begin{equation*}
\begin{array}{rcl}
\beta 
        & = & \resizebox{0.9\hsize}{!}{$
\frac{ \displaystyle   \prod^{n}_{j=1} (1+\beta_j) \left[ 
 \prod^{-n}_{j=-1} (1+\beta_j) 
+  \prod^{-n}_{j=-1} (1-\beta_j) 
- \prod^{n}_{j=1} (1-\beta_j) 
   - \left. \prod^{n}_{j=1} (1-\beta_j) \prod^{-n}_{j=-1} (1+\beta_j)  \middle/ \prod^{n}_{j=1} (1+\beta_j) \right. \right] }
{\displaystyle \prod^{n}_{j=1} (1+\beta_j) \left[ 
  \prod^{-n}_{j=-1} (1+\beta_j) 
+  \prod^{-n}_{j=-1} (1-\beta_j) 
+ \prod^{n}_{j=1} (1-\beta_j) 
  + \left. \prod^{n}_{j=1} (1-\beta_j) \prod^{-n}_{j=-1} (1+\beta_j)  \middle/ \prod^{n}_{j=1} (1+\beta_j) \right. \right] }
$}                      \\
        &   & + \resizebox{0.9\hsize}{!}{$
\frac{ \displaystyle \prod^{n}_{j=1} (1+\beta_j)  \left[ 
  \prod^{-n}_{j=-1} (1+\beta_j) 
-  \prod^{-n}_{j=-1} (1-\beta_j) 
- \prod^{n}_{j=1} (1-\beta_j) 
  + \left. \prod^{n}_{j=1} (1-\beta_j) \prod^{-n}_{j=-1} (1+\beta_j)  \middle/ \prod^{n}_{j=1} (1+\beta_j) \right.  \right] }
{\displaystyle \prod^{n}_{j=1} (1+\beta_j) \left[ 
  \prod^{-n}_{j=-1} (1+\beta_j) 
+  \prod^{-n}_{j=-1} (1-\beta_j) 
+ \prod^{n}_{j=1} (1-\beta_j) 
  + \left. \prod^{n}_{j=1} (1-\beta_j) \prod^{-n}_{j=-1} (1+\beta_j)  \middle/ \prod^{n}_{j=1} (1+\beta_j) \right. \right] }
$}
\end{array}
\end{equation*}

\bigskip \noindent
which expands to,

\begin{equation}
\begin{array}{rcl}
\beta
        & = & \resizebox{0.9\hsize}{!}{$
\frac{ \displaystyle
    \left[ 
 \prod^{-n}_{j=-1} (1-\beta_j) 
   - \left. \left( \prod^{n}_{j=1} (1-\beta_j) \right)^2   \middle/ \prod^{n}_{j=1} (1+\beta_j) \right. \right] }
{\displaystyle
  \left[ 
  \prod^{-n}_{j=-1} (1+\beta_j) 
+  \prod^{-n}_{j=-1} (1-\beta_j) 
+ \prod^{n}_{j=1} (1-\beta_j) 
  + \left. \left( \prod^{n}_{j=1} (1-\beta_j)  \right)^2  \middle/ \prod^{n}_{j=1} (1+\beta_j) \right. \right] }
$}                      \\
        &   & + \resizebox{0.9\hsize}{!}{$
\frac{ \displaystyle
   \left[ 
- \prod^{-n}_{j=-1} (1-\beta_j) 
  + \left. \left( \prod^{n}_{j=1} (1-\beta_j) \right)^2  \middle/ \prod^{n}_{j=1} (1+\beta_j) \right.  \right] }
{\displaystyle
  \left[ 
  \prod^{-n}_{j=-1} (1+\beta_j) 
+  \prod^{-n}_{j=-1} (1-\beta_j) 
+ \prod^{n}_{j=1} (1-\beta_j) 
  + \left. \left( \prod^{n}_{j=1} (1-\beta_j) \right)^2  \middle/ \prod^{n}_{j=1} (1+\beta_j) \right. \right] }.
$}
\end{array}
\end{equation}

\bigskip \noindent
It is clear that the numerator equates to 0 and so $\beta = 0$, completing the proof.
\end{proof}

\bigskip
\begin{remark}
Due to the bi-directionality of BGC stochastic processes, then $n$ in (\ref{Eq:Even}) is always even, so $\beta =0$.
From \citet{Portenko1976}, if $|x_i| \leq 1$, then $| \sum^{n}_{i=1} \alpha_i | \geq 1$ is of special interest.
With BGCSP, $\alpha_{-n} + \alpha_{n} = 0$, $\alpha_{-n+1} + \alpha_{n-1} = 0$,...,$\alpha_{-1} + \alpha_{1} = 0$ due to their symmetry about the origin, hence $| \sum^{n}_{i=1} \alpha_i | = 0$ as well.
\end{remark}

\bigskip
\begin{theorem}
\textbf{(Cylindrical BGCSPs are 2-SBMs)}.
For a complete filtered probability space $(\Omega, \mathcal{F}, \{ \mathcal{F} \}_{t \geq 0}, \mathbb{P})$ and a BGC function $\Psi (y) : \mathbb{R} \rightarrow \mathbb{R}$, $\forall y \in \mathbb{R}$, then the corresponding BGC It\^{o} diffusion is defined as follows,

\bigskip
\begin{equation}
      dY_t  =  f(Y_t, t) \, dt + g(Y_t, t)  \, dW_t  -  \underbrace{ \sgn[Y_t] \Psi  (Y_t, t) }_{\textbf{BGC}},
\end{equation}

\bigskip \noindent
where $f(Y_t, t)$ is the drift coefficient, $g(Y_t, t) $ is the diffusion coefficient, $\sgn[x]$ is the usual sign function, $f(Y_t, t)$,  $g(Y_t, t)$, $\Psi (Y_t, t)$ are convex functions and the 2-SBM is defined by,

\begin{eqnarray}\label{Eqn:3_9_12}
 & & {
\begin{cases}
   \begin{array}{rcl}
\displaystyle  dX_t           & = & \mu \, dt + \sigma \, d W_t 
 +  \underbrace{\beta_{-1} \, dL^{x_{-1} }_{t} }_{<0}
 +  \underbrace{\beta_{1} \, dL^{x_{1} }_{t} }_{>0}\\
\displaystyle  X_0            & = & 0,  \text{ } \mathcal{E} \Big( \mu, \sigma, ( \beta_{-1},  \beta_{1} ) \Big) \\
\displaystyle L^{x_j}_{t} & = & \int ^{t}_{0} \mathds{1}_{ \{ X_s = z_j \} } \, d L^{x_j}_{s},  \text{ }    j \in \{ -1,1 \}
   \end{array},
\end{cases}
}
\end{eqnarray}

\bigskip \noindent
then, $Y_t \rightarrow X_t$ almost surely.
\end{theorem}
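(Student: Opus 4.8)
The plan is to realize the cylindrical BGCSP $Y_t$ as a limit of multi-skew Brownian motions and then invoke Theorem \ref{SkewnessParameterofBGCStochasticProcesses}, which already shows that such a sequence converges to the $2$-SBM $X_t$ of (\ref{Eqn:3_9_12}) with vanishing aggregate skewness $\beta=0$. Concretely, I would interpose an intermediate object $Y^{(n)}_t$, a $(\beta_{-n},\dots,\beta_n)$-SBM of the form (\ref{Eqn:3_9}), between $Y_t$ and $X_t$: first show $Y^{(n)}_t \to Y_t$ as the underlying partition is refined, then use the theorem for $Y^{(n)}_t \to X_t$ as $n\to\infty$, and finally glue the two limits with the triangle inequality.

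First I would discretize the BGC term. Following the construction preceding (\ref{Eqn:3_9}), I partition the state line into intervals $I_k=(x_k,x_{k+1})$, symmetric about the origin, with $|I_k|=|x_{k+1}-x_k|\to 0$, and assign to the endpoint $x_k$ a skewness coefficient $\beta_k$ calibrated so that the cumulative local-time drift $\sum_k \beta_k\,dL^{x_k}_t(Y)$ reproduces the infinitesimal push $-\sgn[Y_t]\,d\Psi(Y_t,t)$. Because $\Psi$ is continuous, differentiable and convex, the It\^{o}--Tanaka formula gives $\Psi(Y_t,t)$ a decomposition whose singular part is an integral of the local times $L^{x}_t(Y)$ against the (nonnegative) second-derivative measure of $\Psi$; letting $|I_k|\to 0$, the associated Riemann-type sum over the $x_k$ converges to that integral, so $Y^{(n)}$ solves an SDE of the shape (\ref{Eqn:3_9}) that approaches the BGC SDE for $Y$. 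One checks en route that $\beta_k\in[-1,1]$ for all $n$ large, since the skewness contributed by each shrinking interval tends to $0$.

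Next I would apply Theorem \ref{SkewnessParameterofBGCStochasticProcesses} to $Y^{(n)}$ directly: with $Y^{(n)}_0=X_0=0$ and barriers $x_{j+1}=\tfrac{j}{n}+x_1$, it yields $\mathbb{E}\big[\sup_{s\in[0,t]}|Y^{(n)}_s-X_s|\big]\to 0$ as $n\to\infty$ for every $t\in[0,T]$, with limiting skewness $\beta=0$ by the symmetry cancellation computed in that proof. Combining this with the partition-refinement limit $Y^{(n)}_t\to Y_t$ (uniformly on $[0,t]$, in $L^1$, or at least in probability) and the triangle inequality gives $\mathbb{E}\big[\sup_{s\in[0,t]}|Y_s-X_s|\big]=0$, hence $\sup_{s\in[0,t]}|Y_s-X_s|=0$ almost surely; since $t$ is arbitrary, $Y_t=X_t$, and in particular $Y_t\to X_t$, almost surely. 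If only convergence in probability of the supremum is available, one extracts an a.s.-convergent subsequence; the limit being identified as $X$, the whole sequence converges a.s.

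The main obstacle is the discretization step. The delicate point is proving that $\sum_k \beta_k\,dL^{x_k}_t(Y)\to -\sgn[Y_t]\,d\Psi(Y_t,t)$ as $|I_k|\to 0$: this needs control of the local times $L^{x_k}_t(Y)$ via the occupation-time formula, uses convexity of $\Psi$ so that $\Psi''$ is a genuine locally finite measure and the It\^{o}--Tanaka decomposition applies, and requires a uniform-integrability argument to interchange the refinement limit with the expectation in the $\sup$-norm estimate. A secondary technical point is the time dependence of $\Psi(\cdot,t)$, which forces a two-parameter Tanaka formula and a joint control of the spatial local time and the $t$-variation of $\Psi$.
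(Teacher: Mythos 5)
Your route is genuinely different from the paper's, but it has a gap at its central step. You discretize the BGC push $-\sgn[Y_t]\,d\Psi(Y_t,t)$ into local-time terms and then invoke Theorem \ref{SkewnessParameterofBGCStochasticProcesses} to send the intermediate $(\beta_{-n},\dots,\beta_n)$-SBM to the 2-SBM $X_t$ of (\ref{Eqn:3_9_12}). But that theorem (like Theorem \ref{Thm:3_7} which it adapts) is a statement about barriers \emph{merging to a single point}: its barriers are $x_{j+1}=\tfrac{j}{n}+x_1$, which all collapse onto $x_1$ as $n\to\infty$, and its conclusion is that the aggregate skewness of the limit is $\beta=0$, i.e.\ the limit is an \emph{unconstrained} ($0$-SBM) process. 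In your discretization the barrier set is dense in a fixed interval (roughly $[-\kappa,\kappa]$, or all of $\mathbb{R}$) and does not collapse to a point, so the hypotheses of the merging theorem are not satisfied; and even if they were, the theorem would identify the limit as a drifted Wiener process, not as the 2-SBM with genuinely reflecting coefficients $\beta_{\pm 1}$ in (\ref{Eqn:3_9_12}). The triangle-inequality gluing therefore identifies the wrong limit, and your final conclusion $Y_t=X_t$ for all $t$ almost surely is both stronger than the claimed convergence $Y_t\to X_t$ and false in general: a smoothly graduated constraint and a two-hard-barrier reflection are distinct processes pathwise. The It\^{o}--Tanaka discretization itself (representing the push as an integral of local times against the second-derivative measure of the convex $\Psi$, in the spirit of \cite{LeGall1984}) is a defensible first step, but it only gets you to an object of the form (\ref{Eqn:3_9}); the passage from there to a genuine 2-SBM is exactly what remains unproved.

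For comparison, the paper's proof does not discretize at all. It argues directly that the convexity of $\Psi$ forces the existence of a critical level $\kappa>0$ (via a neighbourhood $\mathcal{N}(x_0)$ argument) at which the constrained process is effectively fully reflected, identifies $\mathfrak{B}_U=\kappa$ and $\mathfrak{B}_L=-\kappa$ as the two hidden barriers through the limits $\lim_{n\uparrow\kappa}\mathbb{E}\big(\sup_{s\in[0,t]}\big||X^{(n)}_s|-|Y_s|\big|\big)=0$, and then checks that the scaled skewness coefficients $\beta_j/\Psi(\kappa,t)$ preserve their ordering, so that a strong solution exists within a 2-SBM framework by the condition $|\alpha_i|\le 1$ of \cite{Krykun2017}. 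That argument locates the two effective barriers first and fits the M-SBM structure around them, whereas you try to reach the 2-SBM as a limit of M-SBMs; the former avoids the merging-regime mismatch that breaks your second step.
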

\begin{proof}
It is conceivable that under general non-constant $f(X_t, t)$ and $g(X_t, t)$ and some generalized BGC function $\Psi '(f(X_t, t), g(X_t, t), X_t, t)$ that $\Psi '(x)$ could modulate $X_t$ such that it is bounded above and below by a constant barrier at $a$ and $b$, respectively, where $b=-a$. 
For this theorem, we are required to prove that constant over time (i.e. cylindrical) BGC functions $\Psi (X_t, t)$ will converge almost surely to a 2-SBM. 
We know from at least \citet{Krykun2017} that if $|\alpha_i| \leq 1$, $i \in \{ 1,...,n \}$, there exists a strong solution to (\ref{Eqn:3_9_12}).
Since the BGC functions $\Psi (X_t,t) \in \mathbb{R}$ are convex, there exists some value $\kappa$ for both a hidden lower barrier $\mathfrak{B}_L$ and a hidden upper barrier $\mathfrak{B}_L$ that are induced by $\Psi (X_t,t)$.
For BGCSP, there is no fully reflective barrier defined in advance as there is with M-SBM.
However, there are still two fully reflective barriers in BGCSP because the BGC term $\Psi(X_t, t)$ will enable the constrained It\^{o} process $Y_t$ to eventually be overtaken by the underlying unconstrained It\^{o} process $X_t$ such that $|X^{(n)}_{s}| \geq |Y_{s}|$ giving,

\begin{equation}
  \mathfrak{B}_U = \kappa \text{ for } \lim_{n \uparrow \kappa} \left\{ \mathbb{E} \Big( \sup_{s \in [0,t]} \Big| |X^{(n)}_s| - |Y_s| \Big| \Big) \right\} = 0,
\end{equation}
\begin{equation}
  \mathfrak{B}_L = -\kappa \text{ for } \lim_{n \downarrow -\kappa} \left\{ \mathbb{E} \Big( \sup_{s \in [0,t]} \Big| |X^{(n)}_s| - |Y_s| \Big| \Big) \right\} = 0.
\end{equation}

\bigskip \noindent
For this to be true, it must be shown that $\kappa > 0$ exists.
We create a small neighborhood $\mathcal{N}$ about the initial point $x_0$ of radius $\epsilon \in \mathbb{R}_{+}$ such that $\mathcal{N}(x_0) = (x_0 - \epsilon, x_0 + \epsilon)$.
As $\epsilon \rightarrow +\infty$, $\mathfrak{B}_L$ and $\mathfrak{B}_U$ will eventually lie in $\mathcal{N}(x_0)$.

\bigskip \noindent
If $x_0 > 0$, then $\sup\{ \mathcal{N}(x_0) \} = x_0 + \min(\epsilon)$ such that $\mathfrak{B}_U = \sup\{ \mathcal{N}(x_0) \} = \kappa$.\\
If $x_0 = 0$, then $\sup\{ \mathcal{N}(x_0) \} = \inf\{ \mathcal{N}(x_0) \}$ such that $\mathfrak{B}_L = -\mathfrak{B}_U = | \mathfrak{B}_U | = \kappa$.\\
If $x_0 < 0$, then $\inf\{ \mathcal{N}(x_0) \} = x_0 - \min(\epsilon)$ such that $\mathfrak{B}_L = \inf\{ \mathcal{N}(x_0) \} = -\kappa$.

\bigskip \noindent
Hence $\kappa$ exists and its value is $\kappa = f(\Psi(X_t, t), X_t, t, \mu, \sigma)$ for some function $f: \mathbb{R} \rightarrow \mathbb{R}$. 
Having found $\kappa$, we know that the reflectiveness at $\pm \kappa$, i.e. $| \beta_{\kappa} | = 1$, $| \beta_{- \kappa} | = 1$ and before $\pm \kappa$, i.e. $| \beta_{i} | < 1$, $| \beta_{- i} | < 1$.
Hence, $\beta_{-\kappa}, ...,\beta_{-1}, \beta_{1},..., \beta_{\kappa}$ for $X_t$ must be scaled for $Y_t$ by $\Psi(X_t, t)$ and since $\Psi(X_t, t)$ is strictly convex and symmetrical about the origin, then the ordering is preserved,

\begin{equation}\label{Eq:Sequence}
 \frac{\beta_{-\kappa} }{\Psi(\kappa, t)} < ... < \frac{\beta_{-1} }{\Psi(\kappa, t)} < \frac{\beta_{1} }{\Psi(\kappa, t)} <...< \frac{\beta_{\kappa} }{\Psi(\kappa, t)}.
\end{equation}

\bigskip \noindent
(\ref{Eq:Sequence}) ensures that a strong solution to BGCSP exists within a 2-SBM framework, completing the proof.
\end{proof}

\bigskip \noindent
So far, our formulations of BGC functions have been expressed in the general form $\Psi(X_t, t)$, but we have considered BGC barriers induced by time-independent convex surfaces which can be specified by just $\Psi(X_t)$, hence M-SBM is related to BGCSPs with $\Psi(X_t)$.
However, since the barriers have been specified to be able to change not only under space (distance) but over time as well, we demonstrate this additional complexity of BGCSPs in Figure \ref{Fig:Framework2}.

\begin{figure}[ht]
   \centering
 \includegraphics[width=\linewidth]{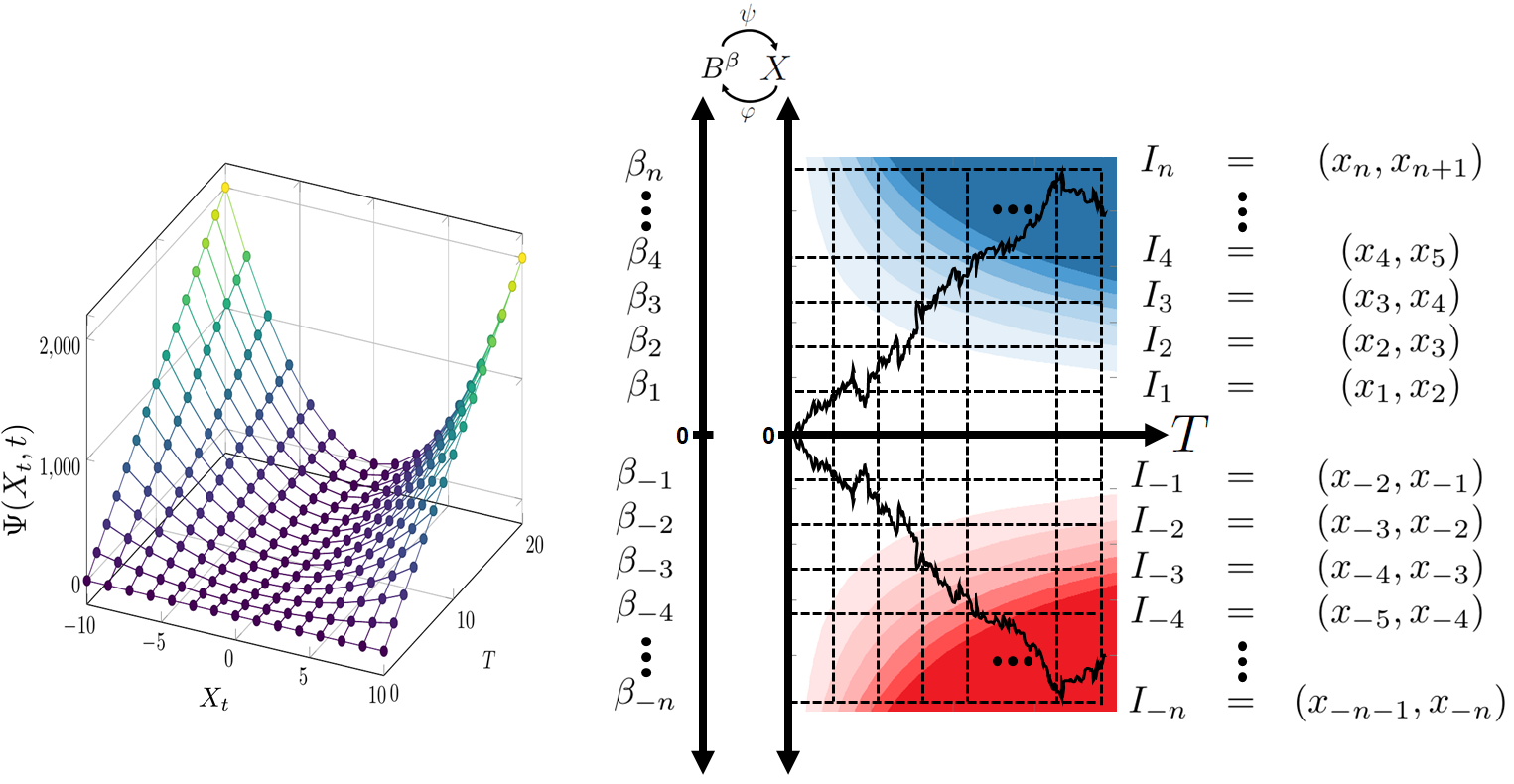}
   \caption{Example BGC Function $\Psi(X_t, t)$ Constraining BGCSPs over Space and Time, more so than in M-SBM}
   \label{Fig:Framework2}
\flushleft
   \textbf{\footnotesize \noindent{BGCSPs are more expansive than M-SBMs (compare with Figure \ref{Fig:StandardFramework}) in the sense that the barriers can change over time, hence $\boldsymbol{\Psi(X_t, t)}$ rather than the simpler $\boldsymbol{\Psi(X_t)}$, which is related to M-SBM.
The mesh plot on the left induces or dictates how the BGCSP on the right is constrained, showing that the M-SBM doesn't cover such time-dependent constraints.
}}
\end{figure}
\FloatBarrier

\bigskip \noindent
Having developed the M-SBM and 2-SBM frameworks for BGCSP, we can now support this by numerical simulations in the Results and Discussion section.

\bigskip
\section{Results and Discussion}


\noindent
In the following simulations, the underlying unconstrained It\^{o} diffusions have drift $\mu = 0$ and diffusion $\sigma = 1$, resulting in just the Wiener process.
This is so that the subsequent impact of BGC can be easily compared.
Despite this, we still refer to these as the more general It\^{o} diffusions because these parameters can be modified for one's specific requirements.

\bigskip \noindent
To validate the existing M-SBM research and to support our comparison of BGCSP with M-SBM, we develop Algorithm \ref{Alg:Algo2} which is used to progressively introduce additional reflective barriers.
In the subsequent series of simulations, we introduce 2, 4, 8, 16 and finally 32 semipermeable barriers, with increasing reflectiveness (i.e. decreasing permeability) the further the It\^{o} diffusion is from the origin, which are simulated via Algorithm \ref{Alg:Algo2}.

\begin{algorithm}\label{Alg:Algo2}
\scriptsize
\caption{Approximating BGC Stochastic Processes via Successive Reflective Barriers}
\# Pseudocode based on R\\
\textbf{INPUT: }\\
$\mu=drift,\text{ }\sigma=diffusion,\text{ }i=simulation\text{ }index,\text{ }$s$=\#\text{ } simulations=10,000,\text{ }t=time\text{ }steps=1001,\text{ }j=time\text{ }index,\text{ }Print\_Simulations=TRUE$\\
\textbf{OUTPUT: }\\
$ID\_value \leftarrow  matrix(0:0, nrow = TimeSteps,   ncol = Simulations)$\\
$T\_1000   \leftarrow  matrix(0:0, nrow = Simulations, ncol = 1)$\\
\For{(i=1:s)}{%
   \For{(j=1:t)}{%
      \uIf{(t$==$1)}{%
         $ID\_value[t,i] \leftarrow 0$
      } \Else {
         $dt = (t/TimeSteps)$\\
         $ID\_value[t,i] \leftarrow ( \mu * dt + \sigma * rnorm(1) )$\\
         $Sum\_ID\_value \leftarrow sum(ID\_value[,i])$\\
         $\text{ }$\\
         $\#$  UPPER BARRIERS$================================$\\
         \uIf{$( (Sum\_ID\_value > 0) \text{ }\&\& \text{ }(Sum\_ID\_value <= UpperBarrier\_01) )$} {
            $\text{Do nothing}$\;
         } \uElseIf {$( (Sum\_ID\_value > UpperBarrier\_01) \text{ }\&\&\text{ } (Sum\_ID\_value <= UpperBarrier\_02) )$} {
           $ID\_value[t,i] \leftarrow ( ID\_value[t,i] -  abs(ID\_value[t,i] * ID\_value[t,i])/100 )$
         } \uElseIf {$( Sum\_ID\_value > UpperBarrier\_16 )$} {
            $ID\_value[t,i] \leftarrow ( - abs(ID\_value[t,i]) )$
         } \Else {
            $ID\_value[t,i] \leftarrow ID\_value[t,i]$
         }
         $\text{ }$\\
         $\#$  LOWER BARRIERS$================================$\\
         \uIf{$( (Sum\_ID\_value < 0) \text{ }\&\&\text{ } (Sum\_ID\_value >= LowerBarrier\_01) )$} {
            $\text{Do nothing}$\;
         } \uElseIf {$( Sum\_ID\_value < LowerBarrier\_16 )$} {
            $ID\_value[t,i] \leftarrow ( abs(ID\_value[t,i]) )$
         } \Else {
            $ID\_value[t,i] \leftarrow ID\_value[t,i]$
         }
      }
      \If{(Print\_Simulations$==$TRUE)} {%
         \uIf{(i$==$1)} {%
            $plot(T, cumsum(ID_value[,i]), type = "l", ylim=c(yMax, yMin) )$
         } \Else {
            $lines(T, cumsum(ID_value[,i]), type = "l", ylim=c(yMax, yMin) )$
         }
      }
   }
   $T\_1000[i] <- sum(ID\_value[,i])$
}
\end{algorithm}


\bigskip \noindent
The simplest application of Algorithm \ref{Alg:Algo2} is shown for two fully reflective barriers in Figure \ref{Fig:10000Simulationsof1000Step1DimensionalItoDiffusionsWith2ReflectiveBarriers}.

\begin{figure}[H]
   \centering
 \includegraphics[width=\linewidth/2 -\linewidth/50]{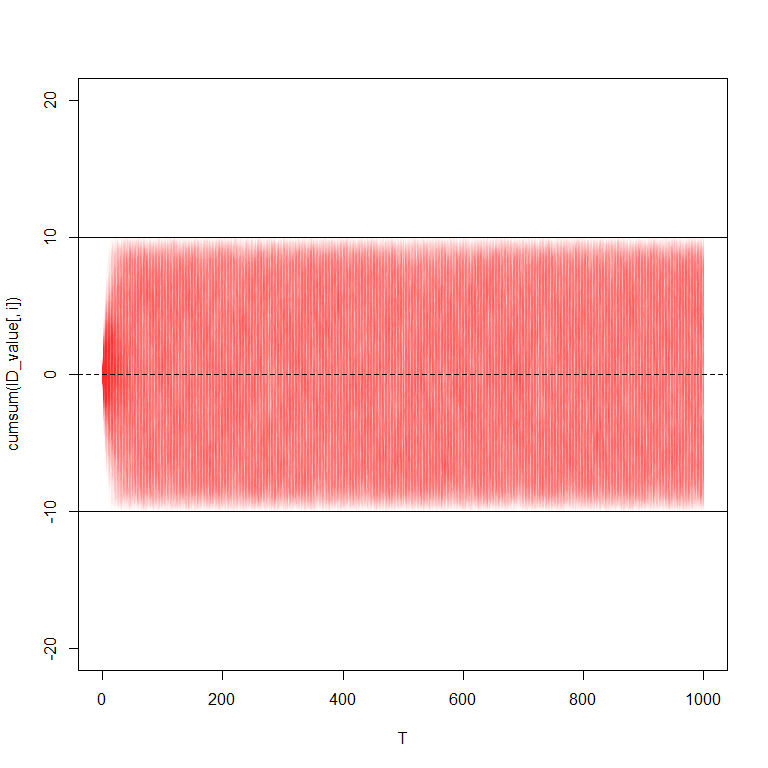}
 \includegraphics[width=\linewidth/2 -\linewidth/50]{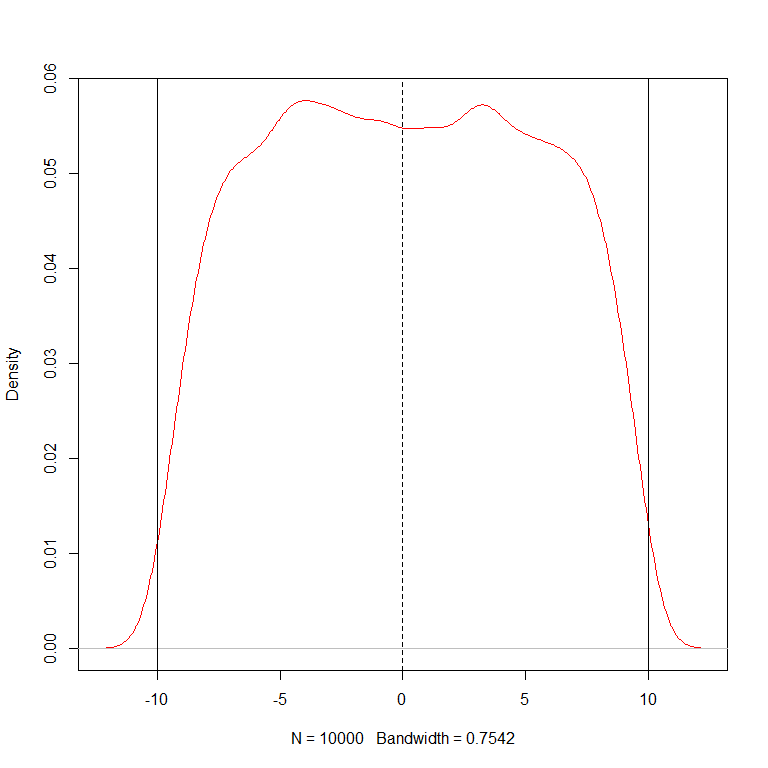}\\ 
  \textbf{\footnotesize \noindent{(a). 1,000 Simulations, \quad  (b). 10,000 Simulation Density}}
   \caption{10,000 Simulations of 1,000 Step 1-Dimensional It\^{o} Diffusions With 2 Reflective Barriers}
   \label{Fig:10000Simulationsof1000Step1DimensionalItoDiffusionsWith2ReflectiveBarriers}
\end{figure}
\FloatBarrier

\bigskip \noindent
Figure \ref{Fig:10000Simulationsof1000Step1DimensionalItoDiffusionsWith2ReflectiveBarriers} has 2 fully reflective barriers at $\pm 10$ generated using Algorithm 1.
This was then increased to 4 barriers (2 fully reflective and 2 semipermeable) as shown in Figure \ref{Fig:10000Simulationsof1000Step1DimensionalItoDiffusionsWith4ReflectiveBarriers}.

\begin{figure}[H]
   \centering
 \includegraphics[width=\linewidth/2 -\linewidth/50]{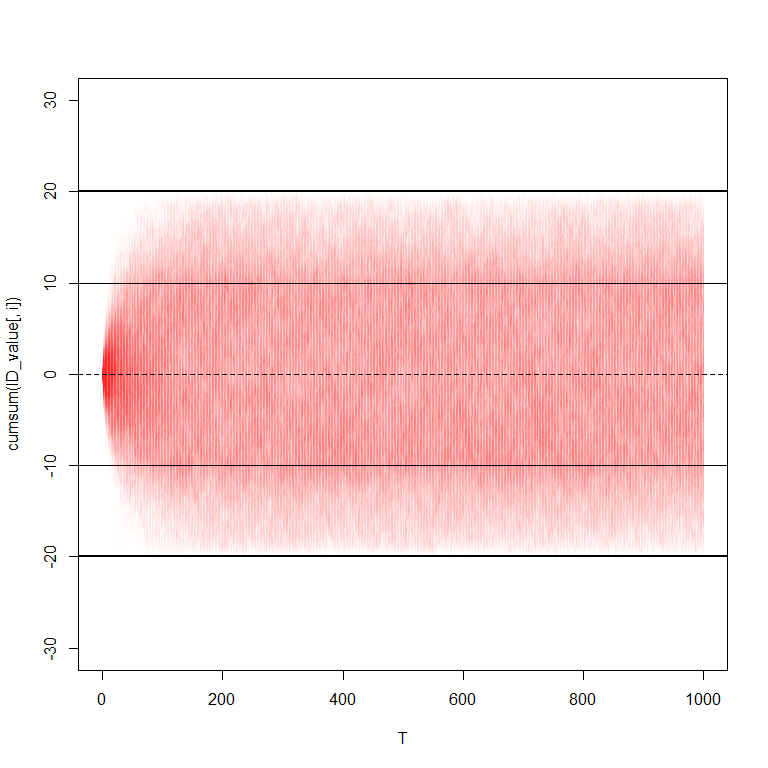}
 \includegraphics[width=\linewidth/2 -\linewidth/50]{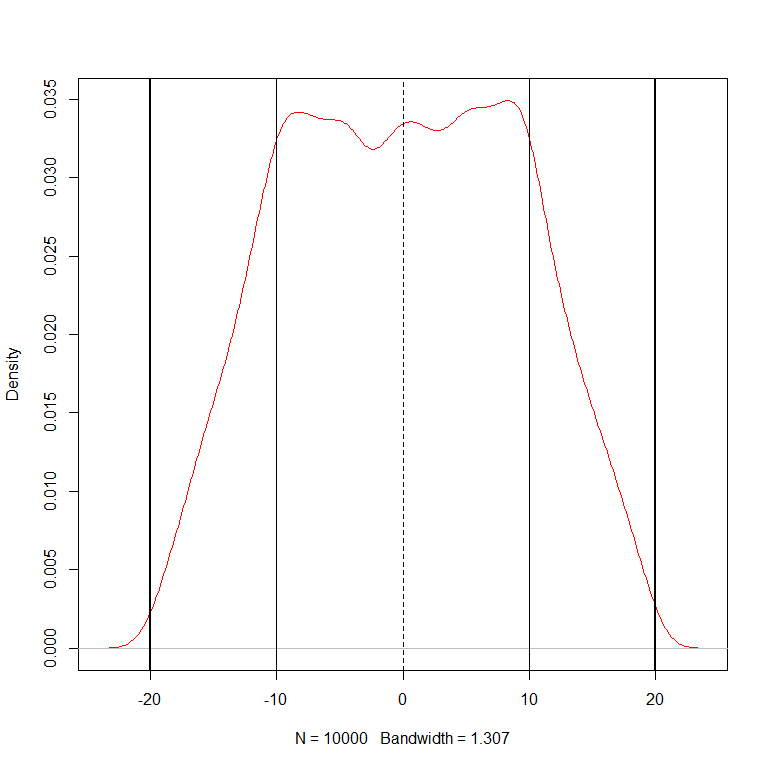}\\ 
  \textbf{\footnotesize \noindent{(a). 1,000 Simulations, \quad  (b). 10,000 Simulation Density}}
   \caption{10,000 Simulations of 1,000 Step 1-Dimensional It\^{o} Diffusions With 4 Reflective Barriers}
   \label{Fig:10000Simulationsof1000Step1DimensionalItoDiffusionsWith4ReflectiveBarriers}
\end{figure}
\FloatBarrier

\bigskip \noindent
In Figure \ref{Fig:10000Simulationsof1000Step1DimensionalItoDiffusionsWith4ReflectiveBarriers}, we make the barriers at $\pm 20$ fully reflective and the barriers at $\pm 10$ now to be semipermeable.
Although it may not yet be apparent due to the thickness of the drawn barriers, we have and will continue to increase the thickness of the barriers to highlight the increasing reflectiveness (and decreasing semipermeability) the further the It\^{o} diffusions are from the origin.
The total number of barriers was doubled again to result in 8 barriers (2 fully reflective and 6 semipermeable), as shown in Figure \ref{Fig:10000Simulationsof1000Step1DimensionalItoDiffusionsWith8ReflectiveBarriers}.

\begin{figure}[H]
   \centering
 \includegraphics[width=\linewidth/2 -\linewidth/50]{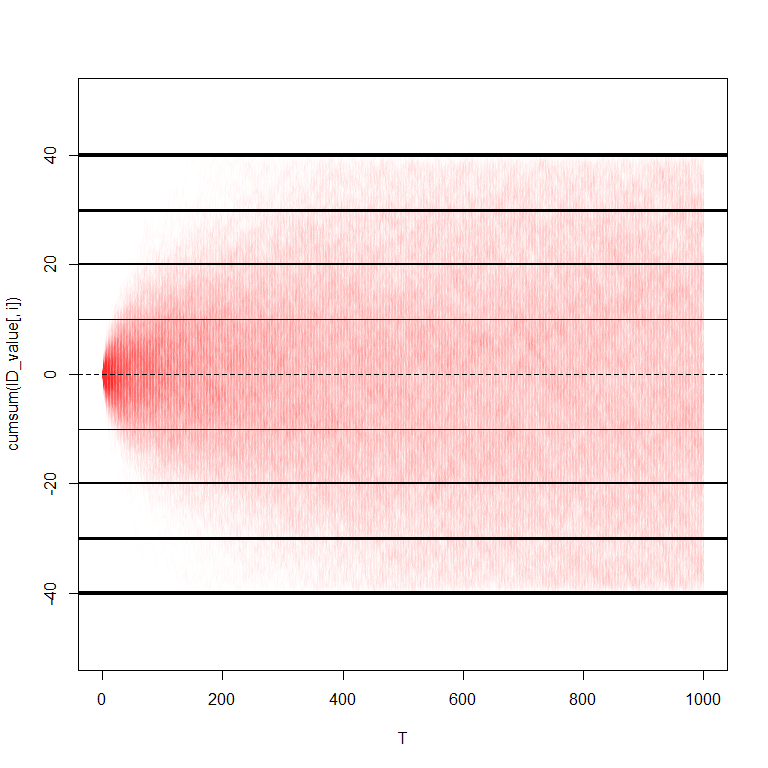}
 \includegraphics[width=\linewidth/2 -\linewidth/50]{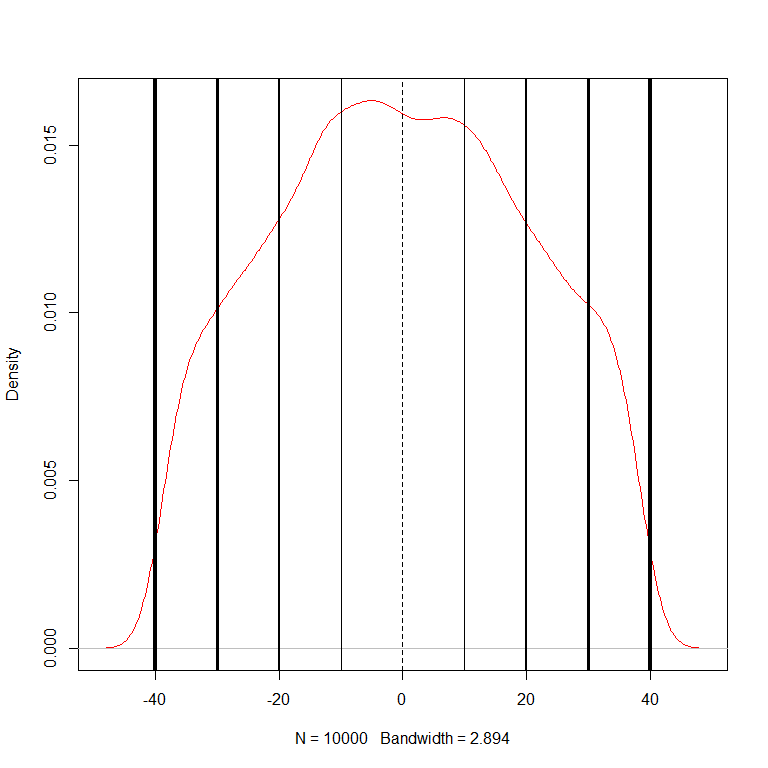}\\ 
  \textbf{\footnotesize \noindent{(a). 1,000 Simulations, \quad  (b). 10,000 Simulation Density}}
   \caption{10,000 Simulations of 1,000 Step 1-Dimensional It\^{o} Diffusions With 8 Reflective Barriers}
   \label{Fig:10000Simulationsof1000Step1DimensionalItoDiffusionsWith8ReflectiveBarriers}
\end{figure}
\FloatBarrier

\bigskip \noindent
In Figure \ref{Fig:10000Simulationsof1000Step1DimensionalItoDiffusionsWith8ReflectiveBarriers}(b), we start to notice a corrugation or `crinkling' of the density due to 2 fully reflective barriers and 6 semipermeable barriers.
This was doubled again to 16 barriers (2 fully reflective and 14 semipermeable), as shown in Figure \ref{Fig:10000Simulationsof1000Step1DimensionalItoDiffusionsWith16ReflectiveBarriers}.

\begin{figure}[H]
   \centering
 \includegraphics[width=\linewidth/2 -\linewidth/50]{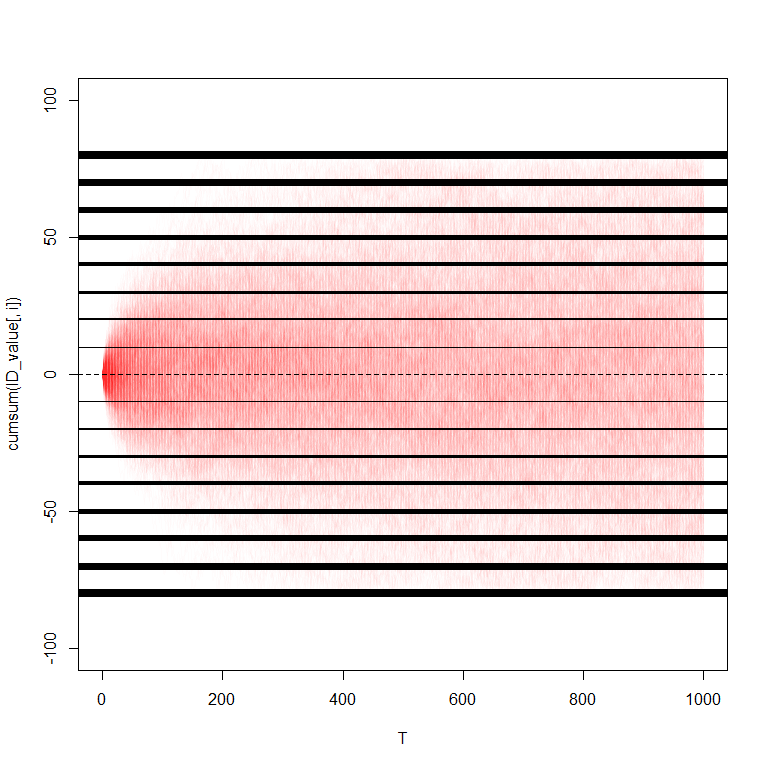}
 \includegraphics[width=\linewidth/2 -\linewidth/50]{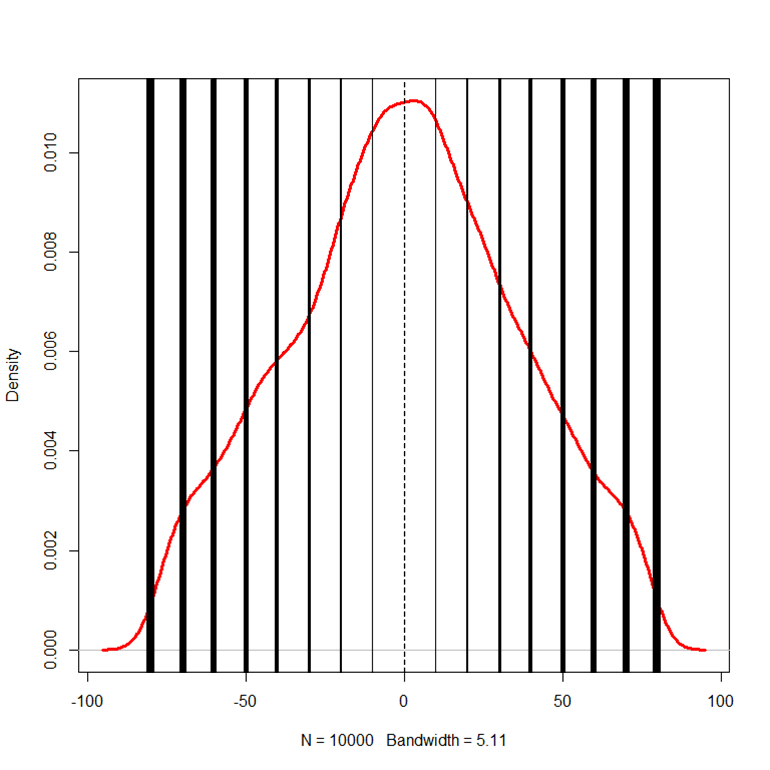}\\ 
  \textbf{\footnotesize \noindent{(a). 1,000 Simulations, \quad  (b). 10,000 Simulation Density}}
   \caption{10,000 Simulations of 1,000 Step 1-Dimensional It\^{o} Diffusions With 16 Reflective Barriers}
   \label{Fig:10000Simulationsof1000Step1DimensionalItoDiffusionsWith16ReflectiveBarriers}
\end{figure}
\FloatBarrier

\bigskip \noindent
In Figure \ref{Fig:10000Simulationsof1000Step1DimensionalItoDiffusionsWith16ReflectiveBarriers}(b), we notice that the corrugation effect has become more pronounced due to another doubling of the number of barriers.
This was doubled again to 32 barriers (2 fully reflective and 30 semipermeable), as shown in Figure \ref{Fig:10000Simulationsof1000Step1DimensionalItoDiffusionsWith32ReflectiveBarriers}.

\begin{figure}[H]
   \centering
 \includegraphics[width=\linewidth/2 -\linewidth/50]{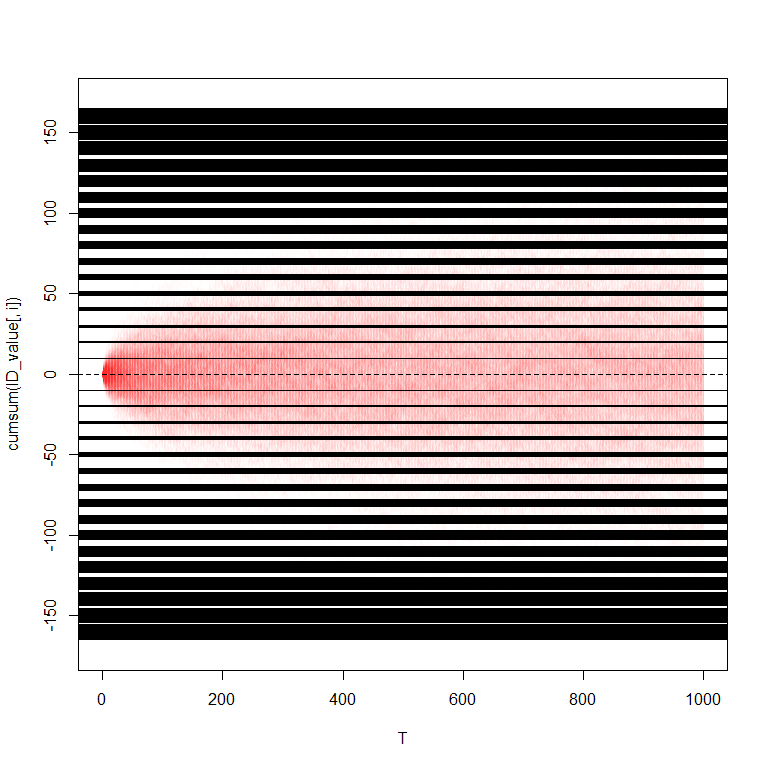}
 \includegraphics[width=\linewidth/2 -\linewidth/50]{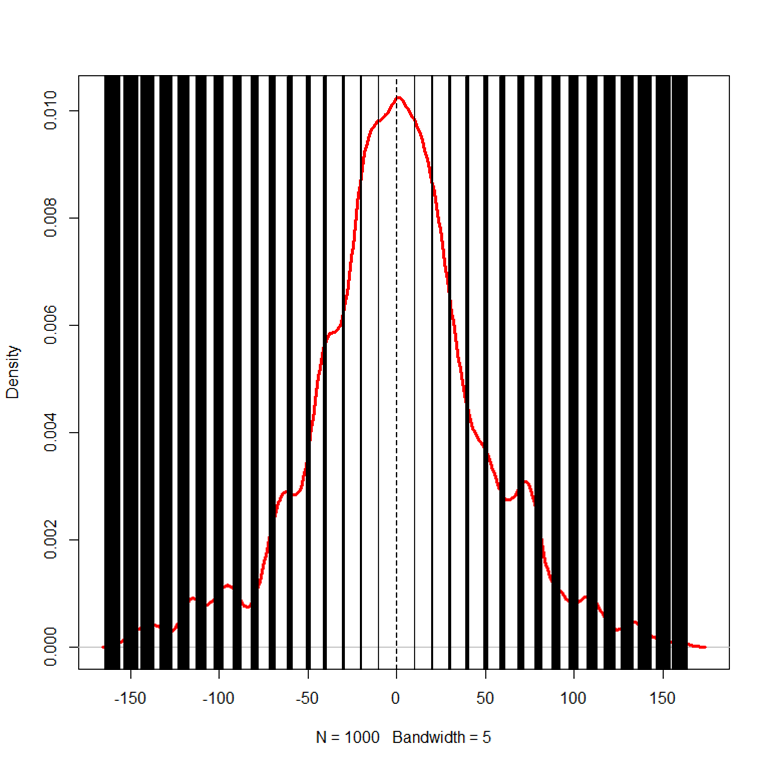}\\ 
  \textbf{\footnotesize \noindent{(a). 1,000 Simulations, \quad  (b). 10,000 Simulation Density}}
   \caption{10,000 Simulations of 1,000 Step 1-Dimensional It\^{o} Diffusions With 32 Reflective Barriers}
   \label{Fig:10000Simulationsof1000Step1DimensionalItoDiffusionsWith32ReflectiveBarriers}
\end{figure}
\FloatBarrier

\bigskip \noindent
Finally, in Figure \ref{Fig:10000Simulationsof1000Step1DimensionalItoDiffusionsWith32ReflectiveBarriers}(b), we notice the most amount of the corrugation effect due to yet another doubling of the number of semipermeable barriers.
Due to the importance of this density as a sufficient approximation of BGC densities, we plot the density again in Figure \ref{Fig:10000Simulationsof1000Step1DimensionalItoDiffusionsWith32ReflectiveBarriersClean} without the barriers depicted and slightly larger.

\begin{figure}[H]
   \centering
 \includegraphics[width=\linewidth]{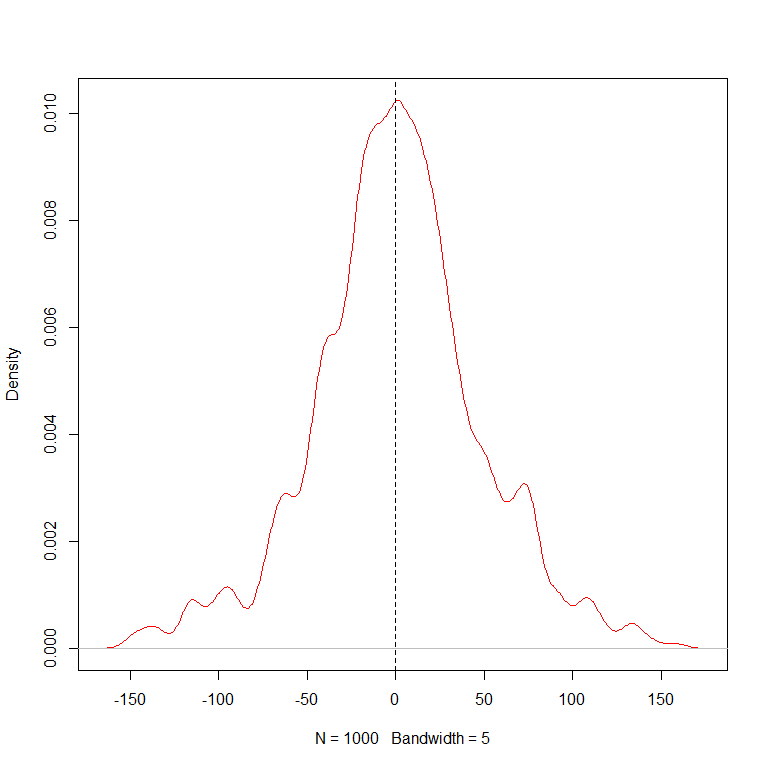}
   \caption{Typical Density of BGC It\^{o} Diffusions Approximated by 10,000 Simulations of 1,000 Step 1-Dimensional It\^{o} Diffusions With 32 Reflective Barriers}
   \label{Fig:10000Simulationsof1000Step1DimensionalItoDiffusionsWith32ReflectiveBarriersClean}
\end{figure}

\bigskip \noindent
Figure \ref{Fig:10000Simulationsof1000Step1DimensionalItoDiffusionsWith32ReflectiveBarriersClean} shows that after 32 barriers, we effectively arrive at the typical density of BGC It\^{o} diffusions, which have an infinite number of increasingly reflective barriers, with just 2 fully reflective hidden barriers.
If we take the limit of this numerical approximation process, the number of such barriers $n$ would approach infinitely many barriers, of smaller and smaller size and hence smaller constraining contribution.
These approximation barriers are thus replaced by the main BGC function itself, $\Psi (X_t, t) = (\frac{X_t}{10})^2$, as shown in Figure \ref{Fig:10000Simulationsof1000StepBGC1DimensionalItoDiffusions}, where the algorithm for BGCSP was stated in \cite{TarantoKhan2020_2}.

\begin{figure}[H]
   \centering
 \includegraphics[width=\linewidth/2 -\linewidth/50]{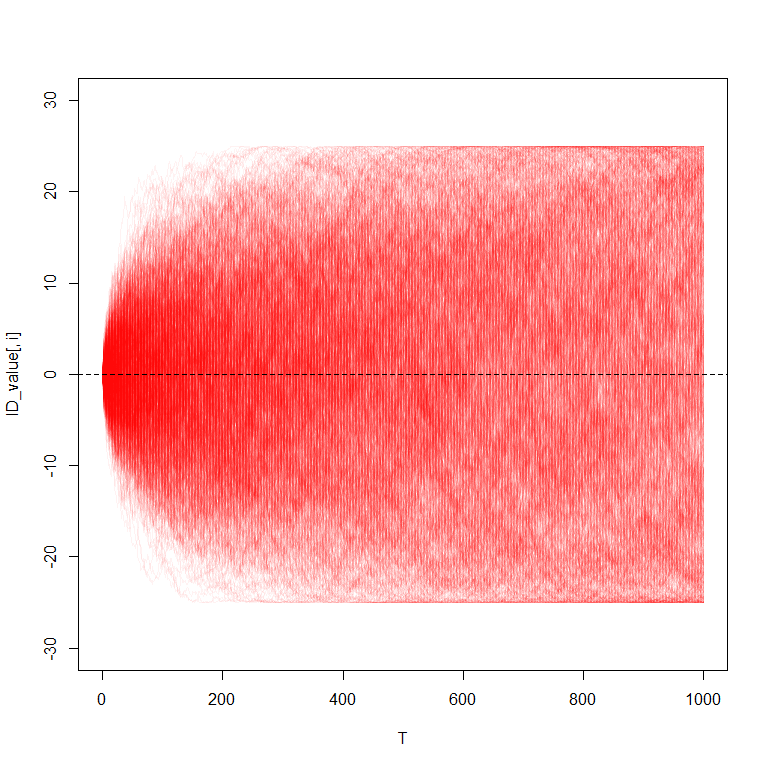}
 \includegraphics[width=\linewidth/2 -\linewidth/50]{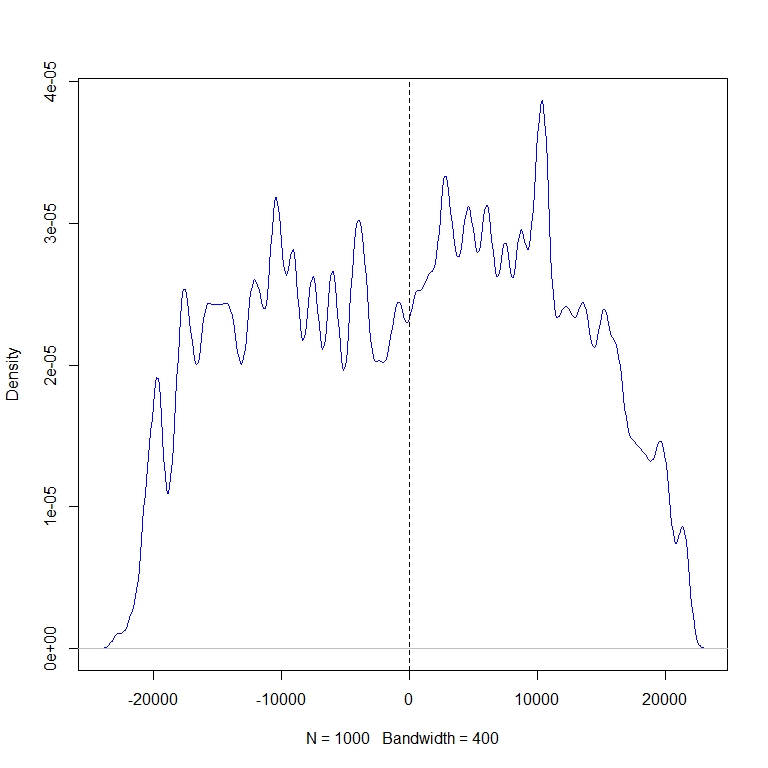}\\ 
  \textbf{\footnotesize \noindent{\quad (a). 1,000 Simulations, \quad \quad \quad (b). 10,000 Simulation Density}}
   \caption{10,000 Simulations of 1,000 Step BGC 1-Dimensional It\^{o} Diffusions}
   \label{Fig:10000Simulationsof1000StepBGC1DimensionalItoDiffusions}
\end{figure}

\bigskip \noindent
From Figure \ref{Fig:10000Simulationsof1000StepBGC1DimensionalItoDiffusions}, we see the typical characteristics of BGCSPs; 1). a certain amount of discretization or banding at various local times, 2). the emergence of two hidden reflective barriers that are not known exactly in advance and can only be estimated, 3). the density is `corrugated' or `rough'.

\bigskip \noindent
The random component of the It\^{o} diffusions, i.e. the $dW_t$ term is sampled from a standard normal distribution that is then constrained by BGCSP.
The density of Figure \ref{Fig:10000Simulationsof1000StepBGC1DimensionalItoDiffusions}(b) has no discontinuities.
However, if we sample the path increments from a discrete binary (i.e. binomial) random distribution, we obtain a random walk that is then constrained by BGCSP.
When a histogram is derived for the corresponding simulated data rather than fitting a density through the distribution, we obtain Figure \ref{Fig:10000Simulationsof1000StepUnconstrained2DimensionalItoDiffusionsWithoutColoration}, which shows the discontinuities, also evident in \cite{TarantoKhan2020_1}.

\begin{figure}[ht]
   \centering
 \includegraphics[width=\linewidth/2 - \linewidth/70]{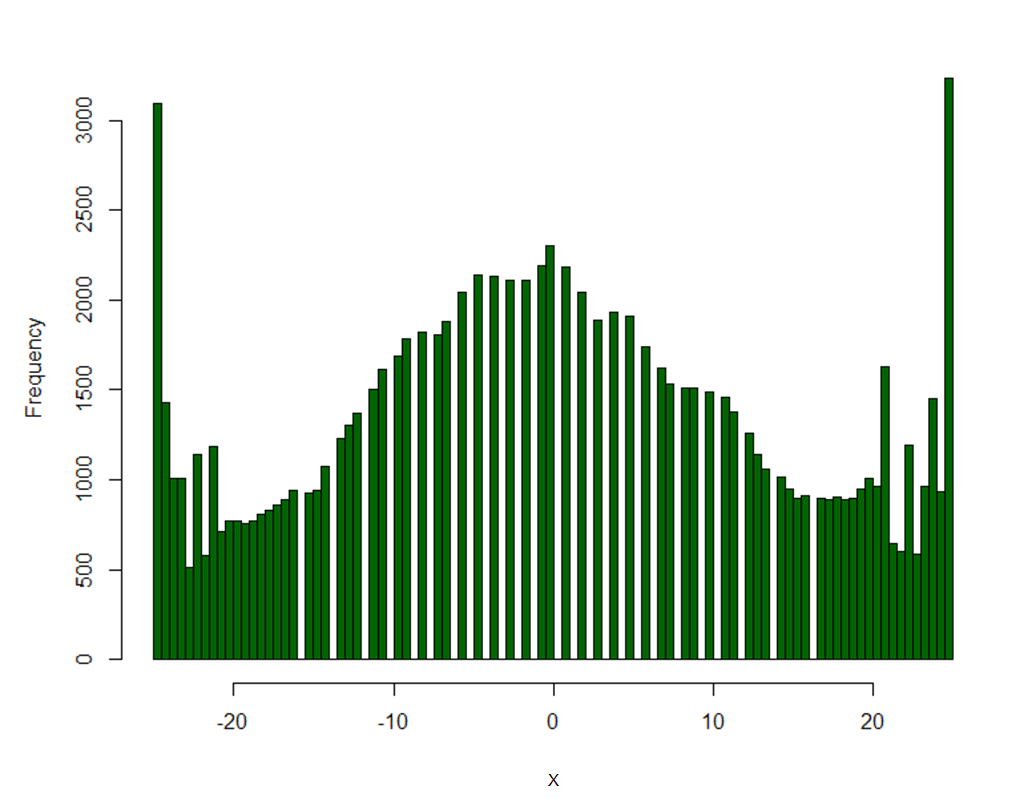}
 \includegraphics[width=\linewidth/2 - \linewidth/70]{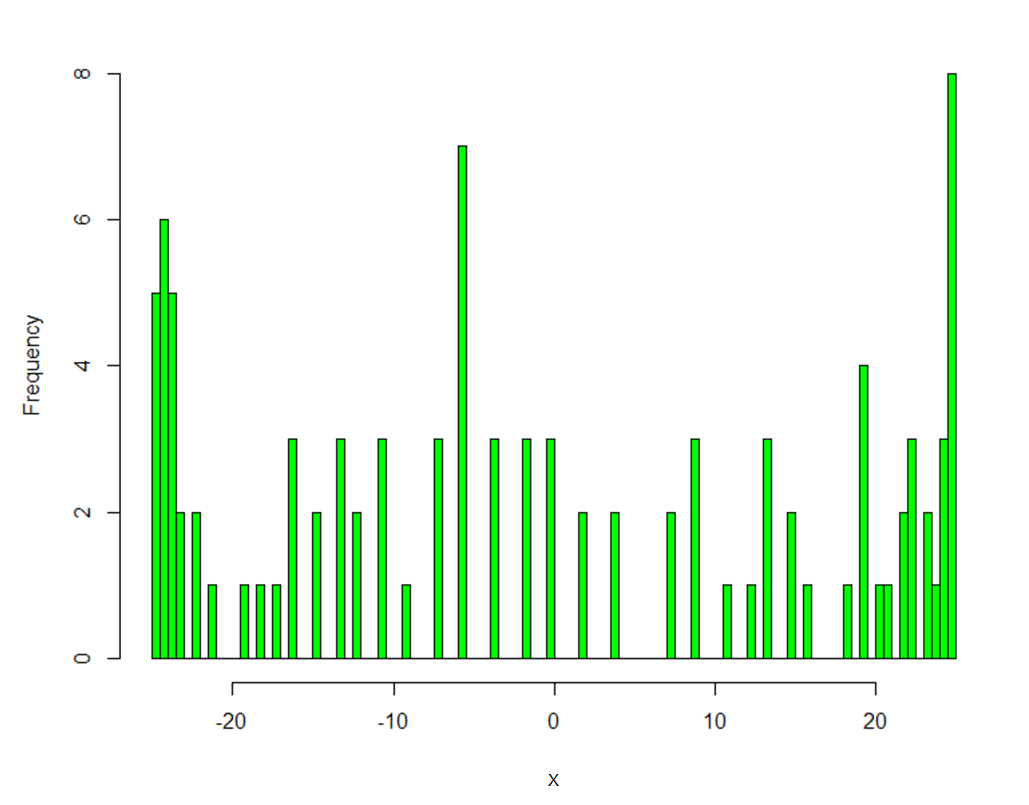}\\
   \textbf{\footnotesize \noindent{
\quad (a). $\boldsymbol{t \in [0, 1000]}$, \quad \quad \quad \quad \quad \quad \quad \quad (b). $\boldsymbol{t =1000}$.
}}
   \caption{Histogram of 10,000 Simulations of 1,000 Step BGC 1-Dimensional It\^{o} Diffusions Showing Discretization or Banding, for Sampling $dW_t$ from a Binomial Distribution}
   \label{Fig:10000Simulationsof1000StepUnconstrained2DimensionalItoDiffusionsWithoutColoration}
\flushleft
   \textbf{\footnotesize \noindent{Note that this is not a density plot as we do not want any Kernal density estimation (KDE) to approximate the distribution.
Increasing the bin size would similarly approximate the histogram.
The main point is that when we consider all points along all paths in (a), we see gaps where the paths do not visit, or visit infrequently.
When we only detect the paths at the end of our timeframe in (b), we see many more gaps and that the gaps are not as homogeneously dispersed as in (a).
}}
\end{figure}
\FloatBarrier

\bigskip \noindent
Figure \ref{Fig:10000Simulationsof1000StepUnconstrained2DimensionalItoDiffusionsWithoutColoration} shows that 1). reflection occurs at the barriers as seen by the peaks on either side of the distribution (more prominent in (a)), and 2). discretization or banding occurs at prominent local times that are contracted together due to the BGC function $\Psi (X_t, t)$.
Figure \ref{Fig:10000Simulationsof1000StepUnconstrained2DimensionalItoDiffusionsWithoutColoration}(a) shows similar `corrugation' as in the continuous case in Figure \ref{Fig:10000Simulationsof1000Step1DimensionalItoDiffusionsWith32ReflectiveBarriersClean}.
Figure \ref{Fig:10000Simulationsof1000StepUnconstrained2DimensionalItoDiffusionsWithoutColoration}(b) shows that at $t=T$, the BGC It\^{o} diffusion is less likely to be at the origin and more likely to be near the barrier(s).
To further demonstrate the characteristics of BGCSP, a detailed plot of 10,000 simulations are shown in Figures \ref{Fig:200ItoDiffusionsFromContinuousDistribution} and \ref{Fig:1000BGCItoDiffusionsSampledFromContinuousDistribution}.

\begin{figure}[htb]
  \centering
  \begin{turn}{90}
  \begin{minipage}{8.5in}
  \centering
   \includegraphics[width=\linewidth]{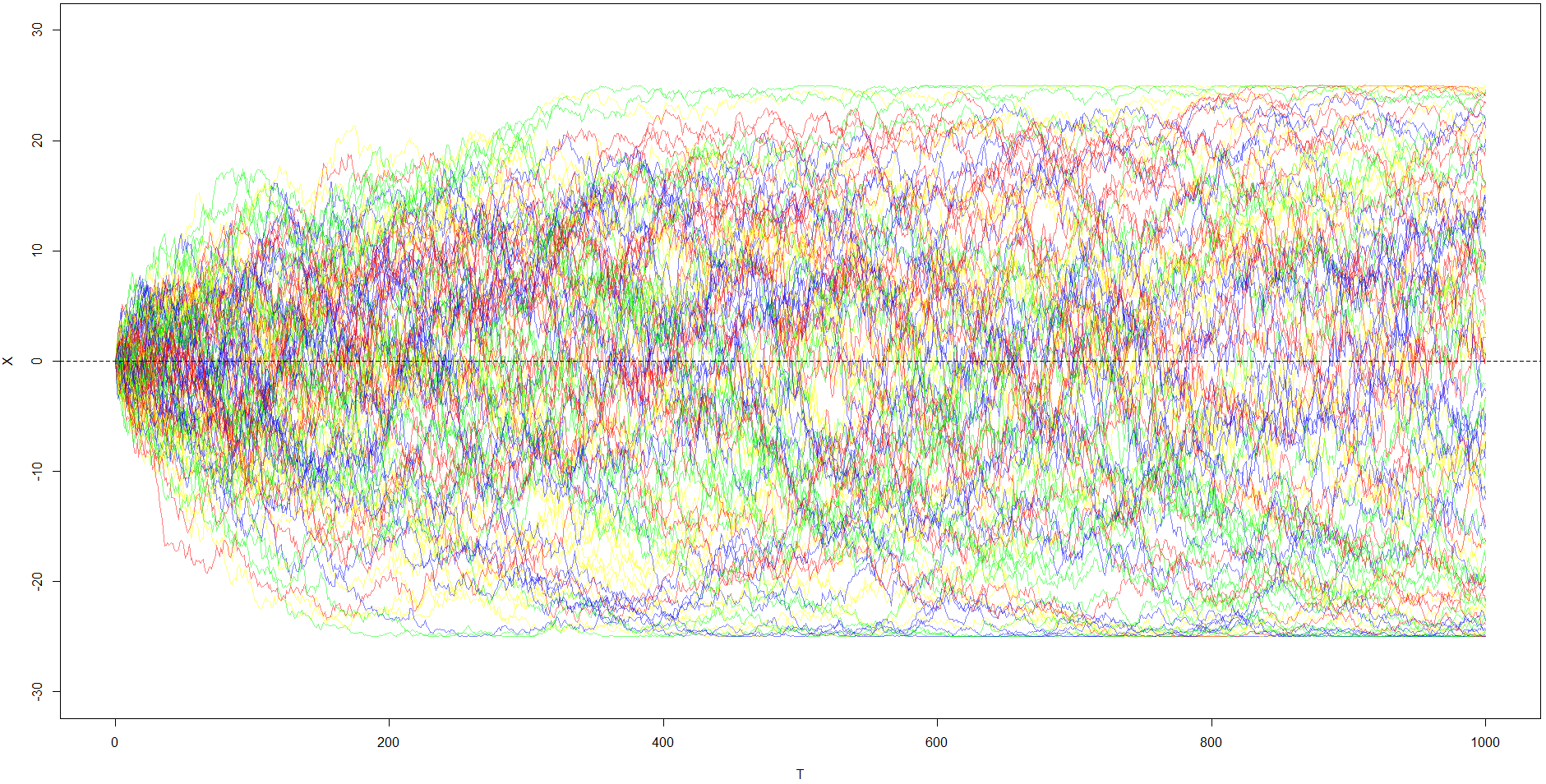}
 \caption{200 BGC It\^{o} Diffusions Sampled from the Standard Normal Distribution}
  \label{Fig:200ItoDiffusionsFromContinuousDistribution}
\flushleft
   \textbf{\footnotesize \noindent{The hidden barriers emerge as the It\^{o} diffusions reach their equilibrium point(s).
The reflectiveness of $\boldsymbol{\Psi(X_t, t)}$ induces the hidden barriers $\mathfrak{B}_L$, $\mathfrak{B}_U$ and they become more visible over time. 
}}
  \end{minipage}
  \end{turn}
\end{figure}
\FloatBarrier

\begin{figure}[htb]
  \centering
  \begin{turn}{90}
  \begin{minipage}{8.5in}
  \centering
   \includegraphics[width=\linewidth]{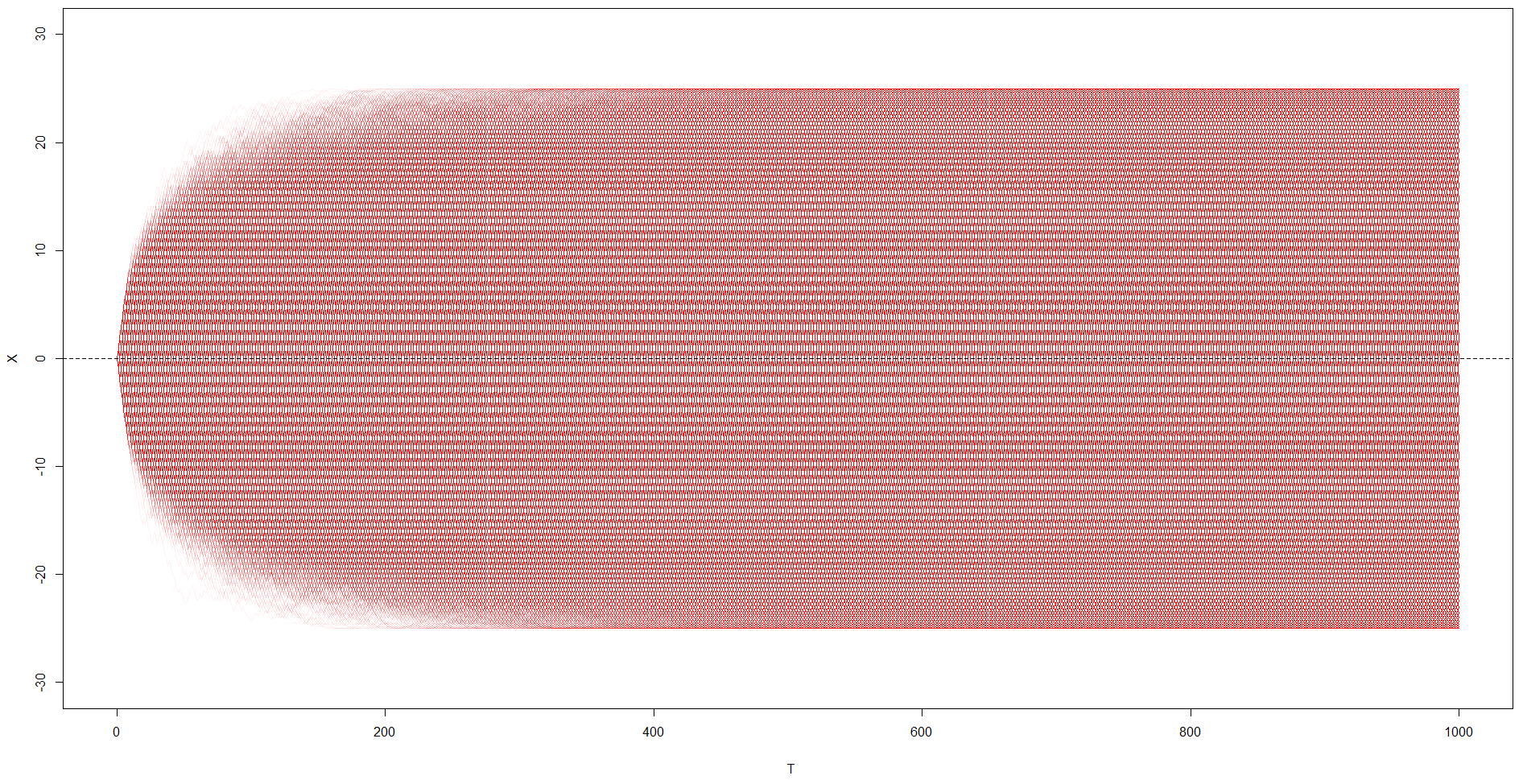}
 \caption{1,000 BGC It\^{o} Diffusions Sampled from the Standard Normal Distribution}
  \label{Fig:1000BGCItoDiffusionsSampledFromContinuousDistribution}
\flushleft
   \textbf{\footnotesize \noindent{Even with continuous $\boldsymbol{dW_t}$, there are regions where the paths do not visit as often.
This would be more noticeable as with our previous research \cite{TarantoKhan2020_1}, \cite{TarantoKhan2020_2}, if only the markers were used rather than the markers and the lines, as is the case in this plot.
We notice a certain amount of banding in regions on either side of the origin.
These regions become closer and closer together the further $\boldsymbol{X}$ is away from the origin.
The use of opacity facilitates these regions to be more visible, otherwise, it would be a uniformly red rectangular region.
}}
  \end{minipage}
  \end{turn}
\end{figure}
\FloatBarrier

\bigskip
\section{Conclusions}

\noindent
This paper has extended the previous theoretical research on BGCSP by comparing them to a type of multi-skew Brownian motion (M-SBM).
This was achieved both theoretically by leveraging existing research, and heuristically by generating new simulations.
Working within the M-SBM framework, we proved one Lemma and two Theorems for BGCSPs.
This research provides a richer framework in which the semipermeable barriers are modulated in a non-constant manner over distance $X$, allowing for a new constraining regime that is more complex than the Ornstein-Uhlenbeck process (OUP) and yet still related to it.
BGCSPs have applications in many fields requiring the constraining of the underlying stochastic process in a gradual manner where the two ultimate reflective barriers are not known in advance.
\FloatBarrier

\bigskip
\bibliographystyle{apalike}

\bibliography{References} 

\end{document}